\theoremstyle{plain}
\newtheorem{theorem}{Theorem}[section]
\newtheorem{lemma}[theorem]{Lemma}
\newtheorem{proposition}[theorem]{Proposition}
\renewenvironment{proof}[1][Proof]{\textbf{#1.} }{\ \rule{0.5em}{0.5em} \par }
\theoremstyle{remark}
\theoremstyle{definition}
\newtheorem{remark}[theorem]{Remark}
\newtheorem{definition}[theorem]{Definition}
\def\RR{\mathbb{R}}
\def\be{{\beta}}
\def\la{{\lambda}}
\def\De{{\Delta}}
\def\al{{\alpha}}
\def\be{{\beta}}
\def\ga{{\gamma}}
\def\De{{\Delta}}
\def\la{{\lambda}}
\def \eref#1{\hbox{(\ref{#1})}}
\def\al{{\alpha}}
\newcommand{\JJ}{\mathcal J}
\newcommand{\ep}{\ensuremath{\varepsilon}}
\renewcommand{\d}{d}
\def\dt{d t}
\def\dr{d r}
\def\ds{d s}
\let\Section=\section
\def\section{\setcounter{equation}{0}\Section}
\title{Nonlinear Young  integrals   via fractional calculus}
\date{January  2015}
\author[Y. Hu]{Yaozhong Hu}
\thanks{Y. Hu is partially supported by a grant from the Simons Foundation \#209206
and by a General Research Fund of University of Kansas.}
\address{Department of Mathematics \\
The University of Kansas \\
Lawrence, Kansas, 66045}
\email{yhu@ku.edu}
\author[K. Le]{Khoa Le}
\email{khoale@ku.edu}
\keywords{   Nonlinear integration; Young integral, iterated nonlinear Young integrals.}
\begin{document}
\begin{abstract} For H\"older continuous functions $W(t,x)$ and
$\varphi_t$, we define nonlinear integral $\int_a^b W(dt, \varphi_t)$ via fractional
calculus.   This nonlinear integral arises naturally in the Feynman-Kac formula for
 stochastic heat  equations with random coefficients \cite{hule2015}. We also
 define iterated nonlinear integrals.
\end{abstract}
\maketitle

\section{Introduction}
Let $\{\varphi_t\,, t\ge 0\}$ be a  H\"older continuous function
and let $\{W(t,x), t\ge 0\,, x\in \RR^d\}$ be another  jointly H\"older continuous function
of several variables.
In authors' recent paper \cite{hule2015} the nonlinear Young integral  $\int_a^b W(dt, \varphi_t)$
is introduced to establish the Feynman-Kac formula for general
stochastic partial differential equations with random coefficients,
namely, $
\partial _t u(t,x)+Lu(t,x) +u(t,x)\partial_tW(t,x)=0\,,$
where
$
Lu(t,x)=\frac12 \sum_{i,j=1}^d a_{ij}(t,x, W) \partial _{x_ix_j}^2
u(t,x) +\sum_{i =1}^d b_{i }(t,x, W) \partial _{x_i } u(t,x)
$
with the coefficients $a_{ij}$ and $b_i$ depending on $W$. In that paper, the
nonlinear Young integral  $\int_a^b W(dt, \varphi_t)$ was defined by approximation,
in particular, by the sewing lemma of \cite{FeyelPradelle06}.  In this paper, we
study the nonlinear Young integral  $\int_a^b W(dt, \varphi_t)$
by means of fractional calculus. This approach may provide more detailed properties
of the solutions to the  equations (see \cite{hunualart2007} and \cite{hunualart2009}).

To expand the solution of a (nonlinear) differential equation
with explicit remainder term we need to define (iterated) multiple
integrals (see \cite{hustochastics}). We shall also give a definition of the iterated nonlinear
Young integrals. Some elementary estimate is  also obtained.

The paper is organized as follows.
Section 2 briefly recall some preliminary material on fractional
calculus that are needed lately. Section 3 deals with nonlinear
Young integral and Section 4 is concerned with iterated
nonlinear  Young integral.

%

\section{Fractional integrals and derivatives}
In this section we recall some results from fractional calculus.

Let   $-\infty<a<b<\infty$,  $\al>0$ and $p\ge 1$ be  real numbers.
Denote by $L^p(a, b)$ the space of all measurable functions on $(a,
b)$ such that
\[
\|f\|_p  :=\left(\int_a^b |f(t)|^p dt\right)^{1/p} <\infty\,.
\]
Denote by  $C([a, b])$ the space of continuous functions on $[a, b]$. Let
$f\in L^{1}\left( [a,b]\right) $.   The left-sided  fractional
Riemann-Liouville integral  $I_{a+}^{\alpha }f $  is defined as
\begin{equation}
I_{a+}^{\alpha }f\left( t\right) =\frac{1}{\Gamma \left( \alpha \right) }%
\int_{a}^{t}\left( t-s\right) ^{\alpha -1}f\left( s\right) \ds\,,
\quad t\in (a, b)\label{e.2.1}
\end{equation}
and the    right-sided fractional Riemann-Liouville integral
$I_{b-}^{\alpha }f $ is defined as
\begin{equation}
I_{b-}^{\alpha }f\left( t\right) =\frac{\left( -1\right) ^{-\alpha
}}{\Gamma \left( \alpha \right) }\int_{t}^{b}\left( s-t\right) ^{\alpha -1}f\left( s\right) \ds\,,
\quad t\in (a, b)
\label{e.2.2}
\end{equation}
 where $\left( -1\right) ^{-\alpha }=e^{-i\pi \alpha }$ and $%
\Gamma \left( \alpha \right) =\int_{0}^{\infty }r^{\alpha -1}e^{-r}\dr$ is
the Euler gamma function. Let $I_{a+}^{\alpha }(L^{p})$ (resp. $%
I_{b-}^{\alpha }(L^{p})$) be the image of $L^{p}(a,b)$ by the operator $%
I_{a+}^{\alpha }$ (resp. $I_{b-}^{\alpha }$). If $f\in I_{a+}^{\alpha
}\left( L^{p}\right) \ $ (resp. $f\in I_{b-}^{\alpha }\left( L^{p}\right) $)
and $0<\alpha <1$,  then the (left-sided or right-sided) Weyl derivatives are defined
(respectively) as
\begin{equation}
D_{a+}^{\alpha }f\left( t\right) =\frac{1}{\Gamma \left( 1-\alpha \right) }%
\left( \frac{f\left( t\right) }{\left( t-a\right) ^{\alpha }}+\alpha
\int_{a}^{t}\frac{f\left( t\right) -f\left( s\right) }{\left(
t-s\right) ^{\alpha +1}}\ds\right)  \label{e.2.3}
\end{equation}
and
\begin{equation}
D_{b-}^{\alpha }f\left( t\right) =\frac{\left( -1\right) ^{\alpha }}{\Gamma
\left( 1-\alpha \right) }\left( \frac{f\left( t\right) }{\left( b-t\right)
^{\alpha }}+\alpha \int_{t}^{b}\frac{f\left( t\right) -f\left( s\right) }{%
\left( s-t\right) ^{\alpha +1}}\ds\right)\,,  \label{e.2.4}
\end{equation}%
where $a\leq t\leq b$ (the convergence of the integrals at the singularity $%
s=t$ holds point-wise for almost all $t\in \left( a,b\right) $ if $p=1$ and
moreover in $L^{p}$-sense if $1<p<\infty $).

It is clear that if $f$ is H\"older continuous of order $\mu>\al$, then
the two Weyl derivatives exist.

For any $\be \in (0,1)$, we denote by $C^{\be }([a,b])$ the space of $%
\be $-H\"{o}lder continuous functions on the interval $[a,b]$. We will
make use of the notation %
\[
\| f\| _{\beta;a,b }=\sup_{a< \theta <r< b}\frac{%
|f(r)-f(\theta )|}{|r-\theta |^{\beta }}
\]
 (which is a seminorm)  and
\[
\|f\|_{\infty;a,b }=\sup_{a\leq r\leq b}|f(r)|,
\]
where $f:\mathbb{R} \rightarrow \mathbb{R}$ is a given continuous
function.

It is well-known that $C^{\be }([a,b])$ with the H\"older  norm
$
\| f\| _{\beta;a,b }  +\| f\|_{\infty;a,b }$ is  a complete  Banach space.
 But it is {\it not} separable

 Using the fractional calculus, we have
 \begin{proposition}
 \label{p1} Let $0<\al<1$.    If $f$ and $g$ are continuously differentiable functions on the interval $[a, b]$, then
 \begin{equation}
 \int_{a}^{b}f\d g=(-1)^{\alpha }\int_{a}^{b}\left( D_{a+}^{\alpha }f\left( t\right)\right)
 \left( D_{b-}^{1-\alpha }g_{b-}\left( t\right) \right) \dt,  \label{e.2.14}
 \end{equation}%
 where $g_{b-}\left( t\right) =g\left( t\right) -g\left( b\right) $.
 \end{proposition}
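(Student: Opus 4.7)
The plan is to reduce the right-hand side of \eqref{e.2.14} to the ordinary Riemann integral $\int_a^b f(t)g'(t)\dt$ by combining three ingredients: the semigroup property of the Riemann--Liouville integrals, the left-inverse relations between those integrals and the Weyl derivatives, and a fractional integration by parts obtained from Fubini's theorem.

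First, since $f,g\in C^1([a,b])$, the left-hand side coincides with the classical Riemann integral $\int_a^b f(t)g'(t)\dt$. Second, using $g_{b-}(b)=0$ one checks directly from \eqref{e.2.2} that $g_{b-}(t)=I_{b-}^1 g'(t)$, the factor $(-1)^{-1}$ in the definition supplying the correct sign. Applying the semigroup identity $I_{b-}^{1-\alpha}I_{b-}^\alpha=I_{b-}^1$ (checked by Fubini and the Beta function, with the prefactors $(-1)^{-(1-\alpha)}$ and $(-1)^{-\alpha}$ multiplying to $(-1)^{-1}$) together with the left-inverse property $D_{b-}^{1-\alpha}I_{b-}^{1-\alpha}=\mathrm{id}$ yields
\[
D_{b-}^{1-\alpha}g_{b-} = I_{b-}^\alpha g'.
\]

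Third, the fractional integration by parts obtained by interchanging the order of integration in the Riemann--Liouville integral reads
\[
\int_a^b (I_{a+}^\alpha\varphi)(t)\,\psi(t)\dt = (-1)^\alpha \int_a^b \varphi(t)\, (I_{b-}^\alpha\psi)(t)\dt,
\]
with the overall $(-1)^\alpha$ coming precisely from the prefactor $(-1)^{-\alpha}$ in \eqref{e.2.2}. Taking $\varphi=D_{a+}^\alpha f$ and $\psi=g'$, and using the left-inverse relation $I_{a+}^\alpha D_{a+}^\alpha f = f$ (valid for $f\in C^1\subset C^\beta$ with any $\beta\in(\alpha,1)$), the ordinary integral becomes
\[
\int_a^b f(t)g'(t)\dt = (-1)^\alpha \int_a^b (D_{a+}^\alpha f)(t)\, (I_{b-}^\alpha g')(t)\dt.
\]
Substituting the identity from the previous step produces \eqref{e.2.14}.

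The main technical obstacle is the careful bookkeeping of the complex-valued prefactors $(-1)^{\pm\alpha}$ and $(-1)^{\pm(1-\alpha)}$ stemming from \eqref{e.2.2} and \eqref{e.2.4}: one must verify that all such signs cancel except for one surviving $(-1)^\alpha$ on the right-hand side. A secondary concern is the applicability of Fubini's theorem, but this is ensured by the $C^1$ hypothesis: $D_{a+}^\alpha f$ has only an integrable singularity of order $(t-a)^{-\alpha}$ at $a$, while $g'$, and hence $I_{b-}^\alpha g'$, is bounded on $(a,b)$, so the double integrals underlying both the semigroup property and the integration by parts are absolutely convergent.
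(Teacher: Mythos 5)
Your argument is correct. Note, though, that the paper itself offers no proof of Proposition \ref{p1}: it is stated as a known fact of fractional calculus, with the analogous formula \eqref{eq.young} attributed to \cite{zahle} and \cite{hustochastics}. What you have written is essentially the standard Z\"ahle-type derivation that those references contain, so there is no divergence of method to speak of. Your three ingredients are all sound under the paper's conventions: $g_{b-}=I_{b-}^{1}g'$ and the composition $I_{b-}^{1-\alpha}I_{b-}^{\alpha}=I_{b-}^{1}$ (the prefactors $(-1)^{-(1-\alpha)}(-1)^{-\alpha}=(-1)^{-1}$ do match), the left-inverse relations $D_{b-}^{1-\alpha}I_{b-}^{1-\alpha}=\mathrm{id}$ and $I_{a+}^{\alpha}D_{a+}^{\alpha}f=f$ (the latter is legitimate for $f\in C^{1}$, e.g.\ by writing $f=f(a)+I_{a+}^{1}f'$ and noting both pieces lie in $I_{a+}^{\alpha}(L^{1})$), and the Fubini-based transposition $\int_a^b (I_{a+}^{\alpha}\varphi)\psi\,\dt=(-1)^{\alpha}\int_a^b \varphi\,(I_{b-}^{\alpha}\psi)\,\dt$, which is exactly the device formalized in the paper's Lemma \ref{lem:ifst}; your integrability check ($|D_{a+}^{\alpha}f(s)|\lesssim (s-a)^{-\alpha}$ against bounded $g'$, giving a Beta-function bound) justifies the interchange. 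So your proposal correctly fills in a proof the paper delegates to the literature, by the same route the literature uses.
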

In what follows $\kappa$ denotes a universal generic
constant depending only on $\la, \tau, \al$
and independent of $W$, $\varphi$ and $a, b$. The value
of $\kappa$ may vary from occurrence to occurrence

For two function $f, g:[a, b]\rightarrow \RR$, we can define the Riemann-Stieltjes
integral $\int_a^b f(t) \d g(t)$.  Here we  recall a  result which is well-known
(see for example \cite{zahle},    \cite{hustochastics}
or \cite{hunualart2007}, \cite{hunualart2009}).
\begin{lemma}\label{lemma.fdg}
Let $f$ and $g$ be H\"older continuous functions of orders $\alpha$
and $\beta$ respectively. Suppose that $\alpha+\beta>1$. Then the
Riemann-Stieltjes integral  $ \int_a^bf(t)\d g(t)$ exists and  for any $\ga\in(1- \beta,\alpha)$,  we have
\begin{equation}
\int_a^bf(t)\d g(t)=(-1)^\gamma \int_a^b D_a^{\ga } f(t) D_{b-}^{1-\ga }g_{b-} (t) \dt \,. \label{eq.young}
\end{equation}
Moreover,   there is a constant $\kappa$ such that
 \begin{equation}\label{est.fdg}
\left|\int_a^bf(t)\d g(t)   \right| \le \kappa
\|g\|_{\beta;a,b}(\|f\|_{ \infty; a,b}|b-a|^{\beta}+\|f\|_{ \alpha; a,b}|b-a|^{\alpha+\beta}).
\end{equation}
\end{lemma}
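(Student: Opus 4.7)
The plan is to take the identity of Proposition \ref{p1} (valid for smooth $f,g$) as the target formula and first verify that its right-hand side makes sense and is continuous in $f,g$ under the Hölder topology. This immediately yields both existence of the Riemann--Stieltjes integral and the formula (\ref{eq.young}) by approximation, and the estimate (\ref{est.fdg}) will drop out of the very same bounds that justify the limit passage. So the work really concentrates on one thing: controlling $D_{a+}^{\gamma}f(t)$ and $D_{b-}^{1-\gamma}g_{b-}(t)$ pointwise in terms of the Hölder seminorms.

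For the fractional derivative of $f$, I would plug the Hölder bounds into the defining formula (\ref{e.2.3}): the boundary term $f(t)/(t-a)^{\gamma}$ is bounded by $\|f\|_{\infty;a,b}(t-a)^{-\gamma}$, and the singular integral $\int_a^t (f(t)-f(s))/(t-s)^{\gamma+1}\,ds$ is bounded by $\|f\|_{\alpha;a,b}\int_a^t (t-s)^{\alpha-\gamma-1}\,ds$, which converges because $\gamma<\alpha$ and equals a constant times $(t-a)^{\alpha-\gamma}$. Hence
\[
|D_{a+}^{\gamma}f(t)|\le \kappa\bigl(\|f\|_{\infty;a,b}(t-a)^{-\gamma}+\|f\|_{\alpha;a,b}(t-a)^{\alpha-\gamma}\bigr).
\]
For $g_{b-}$, the key observation is $g_{b-}(b)=0$, so $|g_{b-}(t)|\le \|g\|_{\beta;a,b}(b-t)^{\beta}$, and by (\ref{e.2.4}) both the boundary term and the singular integral are controlled by $\|g\|_{\beta;a,b}(b-t)^{\beta+\gamma-1}$ (the integral converges because $\gamma>1-\beta$):
\[
|D_{b-}^{1-\gamma}g_{b-}(t)|\le \kappa\,\|g\|_{\beta;a,b}(b-t)^{\beta+\gamma-1}.
\]

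Multiplying these two pointwise bounds and integrating over $[a,b]$ reduces the estimate (\ref{est.fdg}) to two Beta-function integrals, $\int_a^b(t-a)^{-\gamma}(b-t)^{\beta+\gamma-1}\,dt$ and $\int_a^b(t-a)^{\alpha-\gamma}(b-t)^{\beta+\gamma-1}\,dt$, which evaluate to constants times $(b-a)^{\beta}$ and $(b-a)^{\alpha+\beta}$ respectively. The exponents are all in the admissible range thanks to $1-\beta<\gamma<\alpha$. This simultaneously proves (\ref{est.fdg}) and shows that $D_{a+}^{\gamma}f\cdot D_{b-}^{1-\gamma}g_{b-}\in L^1(a,b)$.

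To deduce the existence of the Riemann--Stieltjes integral and the identity (\ref{eq.young}), approximate $f$ and $g$ uniformly by smooth functions $f_n,g_n$ with uniformly controlled Hölder seminorms (standard mollification on a slight extension, then restriction; the Hölder seminorm is lower semicontinuous under mollification, so this is harmless). For the smooth pair, Proposition \ref{p1} applies and gives the identity with $\alpha$ replaced by $\gamma$. The right-hand side passes to the limit by the dominated convergence argument built from the bounds above (the pointwise bounds on $D_{a+}^{\gamma}f_n$ and $D_{b-}^{1-\gamma}(g_n)_{b-}$ depend only on $\|f_n\|_{\infty},\|f_n\|_{\alpha},\|g_n\|_{\beta}$, hence are uniform in $n$). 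The left-hand side, being the Riemann--Stieltjes integral of a smooth pair, converges to the common value and this value must be the Young/Riemann--Stieltjes integral of the limit pair; the existence of the latter along any sequence of partitions follows from the identity plus uniqueness of the limit. The main technical obstacle is nothing deep but simply the bookkeeping on the exponent $\beta+\gamma-1>0$ and on the integrability of the Beta kernels; everything else is a clean consequence of (\ref{e.2.3})--(\ref{e.2.4}) and the definition of Hölder norms.
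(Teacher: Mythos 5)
For the bound \eqref{est.fdg} your argument is exactly the paper's: the two pointwise estimates $|D_{a+}^{\gamma}f(t)|\le \kappa(\|f\|_{\infty;a,b}(t-a)^{-\gamma}+\|f\|_{\alpha;a,b}(t-a)^{\alpha-\gamma})$ and $|D_{b-}^{1-\gamma}g_{b-}(t)|\le \kappa\|g\|_{\beta;a,b}(b-t)^{\beta+\gamma-1}$, followed by the two Beta-type integrals; this part is correct and needs no further comment. Where you diverge is the identity \eqref{eq.young} and the existence of the Riemann--Stieltjes integral: the paper simply cites Z\"ahle and \cite{hustochastics} for that, while you attempt a smoothing argument from Proposition \ref{p1}. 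The limit passage on the right-hand side of \eqref{eq.young} is fine (uniform convergence plus uniform H\"older bounds give pointwise convergence of the fractional derivatives, and your bounds supply an $L^1$ dominant), so you do obtain that $\int_a^b f_n\,dg_n$ converges to the fractional-calculus expression for the limit pair.

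The genuine gap is the last claim, that ``the existence of the latter along any sequence of partitions follows from the identity plus uniqueness of the limit.'' Having a candidate value (the limit of $\int_a^b f_n\,dg_n$) does not show that the Riemann--Stieltjes sums of the pair $(f,g)$ converge, and the naive comparison of Riemann sums of $(f,g)$ with those of $(f_n,g_n)$ fails: the error involves $\|f-f_n\|_{\infty}\sum_i|g(t_{i+1})-g(t_i)|$, and $g$ is in general of unbounded variation, so this is not small uniformly in the partition. To close the gap you need a quantitative Young-type step, for instance: define $I(u,v):=(-1)^{\gamma}\int_u^v D_{u+}^{\gamma}f\,D_{v-}^{1-\gamma}g_{v-}\,dt$, check its additivity over adjacent intervals, note that for constant $f$ it reduces to $f\,(g(v)-g(u))$, and then apply your own estimate \eqref{est.fdg} to $f-f(t_i)$ on each subinterval of a partition to get $|I(t_i,t_{i+1})-f(t_i)(g(t_{i+1})-g(t_i))|\le \kappa\|f\|_{\alpha}\|g\|_{\beta}|t_{i+1}-t_i|^{\alpha+\beta}$; since $\alpha+\beta>1$ these errors sum to $o(1)$ as the mesh shrinks (this is exactly the sewing-lemma mechanism the paper uses later), which proves that the Riemann sums converge to $I(a,b)$ and hence yields both existence and \eqref{eq.young}. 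Alternatively, do what the paper does and cite Young's theorem or Z\"ahle for existence, keeping your approximation argument only for the identity.
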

\begin{proof} We refer to \cite{zahle} or  \cite{hustochastics}  for a proof of \eqref{eq.young}. We
shall outline a    proof  of \eref{est.fdg}.
Let $\gamma$ be such that $\alpha>\gamma>1-\beta$. Applying
fractional integration by parts formula \eqref{eq.young}, we obtain
\begin{equation*}
\left|\int_a^bf(t)\d g(t)\right|\le
\int_a^b\!|D^\gamma_{a+}f(t)D^{1-\gamma}_{b-}g_{b-}(t)|\d t.
\end{equation*}
From \eqref{e.2.3} and \eqref{e.2.4} it is easy to see
\begin{equation*}
|D^{1-\gamma}_{b-}g_{b-}(t)|\le
\kappa\|g\|_{\beta;a,b}(b-r)^{\beta+\gamma-1}
\end{equation*}
and
\begin{equation*}
|D^\gamma_{a+}f(t)|\le \kappa[\|f\|_{\infty;a,b}(t-a)^{-\gamma}
+\|f\|_{\alpha;a,b}(t-a)^{\alpha-\gamma}].
\end{equation*}
Therefore
\begin{multline*}
\left|\int_a^bf(t)\d g(t)\right|\le
\kappa\|g\|_{\beta;a,b}\left(\|f\|_{\infty;a,b}\int_a^b\!(t-a)^{-\gamma}(b-t)^{\beta+\gamma-1}\d
t\right.\\\left.+\|f\|_{\alpha;a,b}\int_a^b\!(t-a)^{\alpha-\gamma}(b-t)^{\beta+\gamma-1}\d
t\right).
\end{multline*}
The integrals on the right hand side can be computed by making the
substitution $t=b-(b-a)s$, hence we derive \eqref{est.fdg}
\end{proof}

We also need the following lemma in the proof of our main results.
\begin{lemma}
\label{lem:ifst}
Let $ f(s,t), a\le  s< t\le b$ be a measurable
function of $s$ and $t$ such that
\[
\int_{a}^{b}\int_{a}^{t}\frac{|f(s,t)|}{(t-s)^{1-\alpha}}\ds\dt<\infty.
\]
Then
\[
\int_{a}^{b}I_{a+}^{\alpha,t}f(t,t')|_{t'=t}\dt=(-1)^{\alpha}\int_{a}^{b}I_{b-}^{\alpha,t'}f(t,t')|_{t'=t}\dt.
\]
\end{lemma}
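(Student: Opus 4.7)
The plan is to expand both sides as iterated integrals over the triangle $\{(s,t):a\le s\le t\le b\}$ with the common kernel $(t-s)^{\alpha-1}f(s,t)$, and then invoke Fubini's theorem; the statement is essentially Fubini dressed up in fractional-calculus notation.

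First I would unfold the definitions. Since $I_{a+}^{\alpha,t}$ is applied in the first argument of $f$ (with evaluation point $t$ and dummy $s$), formula \eqref{e.2.1} gives
\[
\int_a^b I_{a+}^{\alpha,t}f(t,t')\big|_{t'=t}\,\dt
=\frac{1}{\Gamma(\alpha)}\int_a^b\!\int_a^t (t-s)^{\alpha-1}f(s,t)\,\ds\,\dt,
\]
while \eqref{e.2.2} applied in the second argument of $f$ (evaluation point $t'$, dummy $s$) gives
\[
\int_a^b I_{b-}^{\alpha,t'}f(t,t')\big|_{t'=t}\,\dt
=\frac{(-1)^{-\alpha}}{\Gamma(\alpha)}\int_a^b\!\int_t^b (s-t)^{\alpha-1}f(t,s)\,\ds\,\dt.
\]

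Next I would put the second double integral into the same shape as the first by renaming the dummy variables ($s\leftrightarrow t$), which recasts it as
\[
\frac{(-1)^{-\alpha}}{\Gamma(\alpha)}\int_a^b\!\int_s^b (t-s)^{\alpha-1}f(s,t)\,\dt\,\ds.
\]
At this point both sides are iterated integrals of the same kernel $(t-s)^{\alpha-1}f(s,t)$ over the same triangle $\{a\le s\le t\le b\}$, only with the order of integration reversed and different prefactors.

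The final step is Fubini's theorem: the stated hypothesis
\[
\int_a^b\!\int_a^t \frac{|f(s,t)|}{(t-s)^{1-\alpha}}\,\ds\,\dt<\infty
\]
is precisely the absolute-integrability condition needed to interchange the two orders on the triangle, and once this is done the two double integrals agree. Comparing the constants $1/\Gamma(\alpha)$ and $(-1)^{-\alpha}/\Gamma(\alpha)$ produces the factor $(-1)^{\alpha}$ appearing in the statement. I do not anticipate a genuine obstacle; the only point requiring care is the bookkeeping between the two different $t$'s — the outer integration variable and the evaluation point of the inner fractional integral — which is the reason the lemma deserves an explicit statement in the first place.
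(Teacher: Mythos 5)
Your proposal is correct and follows essentially the same route as the paper: unfold the fractional integrals into iterated integrals of the kernel $(t-s)^{\alpha-1}f(s,t)$ over the triangle $\{a\le s\le t\le b\}$, swap the order via Fubini under the stated absolute-integrability hypothesis, and absorb the $(-1)^{-\alpha}$ prefactor into the factor $(-1)^{\alpha}$. The paper's proof is exactly this one-line Fubini computation, so there is nothing to add.
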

\begin{proof}
An  application of Fubini's theorem yields
\begin{align*}
\int_{a}^{b}I_{a+}^{\alpha,t}f(t,t')|_{t'=t}\dt  &=  \frac{1}{\Gamma(\alpha)}\int_{a}^{b}\int_{a}^{t}\frac{f(s,t)}{(t-s)^{1-\alpha}}\ds\dt\\
  &= \frac{1}{\Gamma(\alpha)}\int_{a}^{b}\int_{s}^{b}\frac{f(s,t)}{(t-s)^{1-\alpha}}\dt\ds\\
  &=  (-1)^{\alpha}\int_{a}^{b}I_{b-}^{\alpha,t'}f(t,t')|_{t'=t}\dt
 \end{align*}
 which is the lemma.
\end{proof}

\setcounter{equation}{0}
\section{Nonlinear integral}\label{sec.pathint}
\global\long\def\ctau{C^{\tau}\left([t_{0}-T,t_{0}+T]\right)}

In this section we shall use fractional calculus to define
the (pathwise) nonlinear   integral $\int_{a}^{b}W(\dt,\varphi_{t})$.
This method  only relies
on regularity of the sample paths of $W$ and $\varphi$. More precisely, it is applicable to stochastic
processes with H\"older continuous sample paths.


Another advantage of this approach is that in the theory of stochastic processes
it is usually difficult to obtain almost sure  type of results.  If
the  sample paths of the process is H\"older continuous, then one can apply
this approach to each sample path and almost surely results are then automatic.

In what follows, we shall use $W$ to denote a deterministic function
 $W: \RR\times \RR^d\rightarrow \RR^d$.
We make the following assumption on the regularity of $W$
\begin{enumerate}[label=\bm{$(W)$}]
\item\label{cond.w} There are constants $\tau\,, \la\in (0, 1]$, $\beta\ge0$ such that
for all $a<b$, the seminorm
\begin{equation}\label{eq:cond.w2}
\begin{split}
  &\|W\|_{ \tau, \la; a, b }\\
  :&=\sup_{\substack{a\le s<t\le b\\ x,y\in \RR^d;x\neq y}}\frac{
\left|W(s,x)-W(t,x)-W(s,y) + W(t,y)\right|}{
  |t-s|^{\tau}|x-y|^{\lambda}}\\
 &\quad+ \sup_{\substack{a\le s<t\le b\\ x\in \RR^d}}\frac{
\left|W(s,x)-W(t,x)\right|}{
   |t-s|^{\tau}}+\sup_{\substack{a\le t\le b\\ x,y\in \RR^d;x\neq y}}\frac{
\left|W(t,y) - W(t,x)\right|}{
   |x-y|^{\lambda}} \,,
\end{split}
\end{equation}
is finite.
\end{enumerate}
About the function $\varphi$, we assume
\begin{enumerate}[label=$\bm{(\phi)}$]
  \item\label{cond.phi} $\varphi$ is locally H\"older continuous of order $\ga\in (0, 1]$. That is the seminorm
  \[
  \|\varphi\|_{\ga; a, b}=\sup_{a\le s<t\le b}\frac{|\varphi(t)-\varphi(s)|}{|t-s|^\ga}\,,
  \]
  is finite for every $a<b$.
\end{enumerate}
Among three terms appear in \ref{cond.w}, we will pay special attention to the first term. Thus, we denote
\begin{equation*}
  [W]_{\tau,\lambda;a,b}=\sup_{\substack{a\le s<t\le b\\ x,y\in \RR^d;x\neq y}}\frac{
\left|W(s,x)-W(t,x)-W(s,y) + W(t,y)\right|}{
   |t-s|^{\tau}|x-y|^{\lambda}}\,.
\end{equation*}
If $a, b$ is clear in the context, we frequently omit the dependence on $a, b$.
For instance,   $\|W\|_{  \tau, \la}$ is an abbreviation for $\|W\|_{  \tau, \la; a,b}$, $\|\varphi\|_{\gamma}$ is an abbreviation for $\|\varphi\|_{\gamma;a,b}$  and so on.
We shall assume that $a$ and $b$ are finite.  Thus it is easy
to see that for any $c\in [a, b]$
\[
\sup_{a\le t\le b}|\varphi(t)|=
\sup_{a\le t\le b}| \varphi(c)+\varphi(t)-\varphi(c)|
\le |\varphi(c)|+\| \varphi\|_{\ga} |b-a|^\ga<\infty\,.
\]
Thus assumption \ref{cond.phi} also implies that
\[
\|\varphi\|_{\infty; a, b}
:=\sup_{a\le t\le b}|\varphi(t)|<\infty\,.
\]
For the results presented in this section, the condition \ref{cond.w} can be relaxed to
\begin{enumerate}[label=\bm{$(W')$}]
\item \label{cond.wprime} There are constants $\tau\,, \la\in (0, 1]$, such that
for all $a<b$ and compact set $K$ in $\RR^d$, the seminorm
\begin{equation*}
\begin{split}
  &\sup_{\substack{a\le s<t\le b\\ x,y\in K;x\neq y}}\frac{
\left|W(s,x)-W(t,x)-W(s,y) + W(t,y)\right|}{
   |t-s|^{\tau}|x-y|^{\lambda}}\\
 &\quad+ \sup_{\substack{a\le s<t\le b\\ x\in K}}\frac{
\left|W(s,x)-W(t,x)\right|}{
  |t-s|^{\tau}}+\sup_{\substack{a\le t\le b\\ x,y\in K;x\neq y}}\frac{
\left|W(t,y) - W(t,x)\right|}{
  |x-y|^{\lambda}} \,,
\end{split}
\end{equation*}
is finite.
\end{enumerate}
However, for simplicity, we only employ \ref{cond.w}.



One of our main results in  this section is to define
$\int_{a}^{b}W(\dt,\varphi_{t})$ under the condition
$\la \ga+\tau>1$ through fractional integration by parts technique.
The following definition is  motivated from   Lemma \ref{lemma.fdg}.
\begin{definition}\label{defn.fracint}
    We define
    \begin{equation}\label{eqn.def.fintw}
        \int_{a}^{b}W(\dt,\varphi_{t})=(-1)^{\alpha}
        \int_{a}^{b}D_{a+}^{\alpha,t'}D_{b-}^{1-\alpha,t}W_{b-}(t,\varphi_{t'})|_{t'=t}\dt
    \end{equation}
    whenever the right hand side makes sense.
\end{definition}
\begin{remark}\label{r.3.relation}
Assume $d=1$.   Let
$W(t, x)=g(t) x$ be of the product form   and let $\varphi(t)=f(t)$,
where $g$ is a H\"older continuous function of exponent
$\tau$ and $f $ is a H\"older continuous function of exponent  $\la$.
If $1-\tau<\al<\la$,  then
\begin{align*}
\int_a^b W(\dt, \varphi_t)
&=(-1)^{\alpha}\int_{a}^{b}D_{a+}^{\alpha,t'}D_{b-}^{1-\alpha,t}W_{b-}(t, {t'})|_{t'=t}\dt\\
&= (-1)^{\alpha}\int_{a}^{b} D_{b-}^{1-\alpha,t}g_{b-} (t) D_{a+}^{\alpha, t}f( t)\dt\,.
\end{align*}
Thus from \eref{eq.young},
$\int_a^b W(\dt, \varphi_t)$ is  an extension of the classical
Young's integral $\int_a^b f(t) \d g(t)$ (see \cite{hustochastics},
\cite{young}, \cite{zahle}).   For general $d$,  if $W(t,x)=\sum_{i=1}^d g_i(t)x_i$
and $\varphi_i(t)=f_i(t)$,  then it is easy to see that
$\int_a^b W(\dt, \varphi_t)=\displaystyle
\sum_{i=1}^d \int_a^b f_i(t) \d g_i(t)$.
%
\end{remark}
The following result clarifies the context in which Definition \ref{defn.fracint} is justified.
\begin{theorem}\label{fractional}
 Assume the conditions  \ref{cond.w} and \ref{cond.phi} are satisfied. In addition, we suppose that
$\la \ga+\tau>1$. Let $\alpha\in(1- \tau,\lambda \tau)$. Then the
 right hand side of \eqref{eqn.def.fintw} is finite and is
 independent of $\alpha\in(1-\tau\,,  \la)$. As a consequence, we have
\begin{equation}\label{eq:def.w}
\begin{split}
&\int_{a}^{b}W(\dt,\varphi_{t})\\
&=(-1)^{\alpha}\int_{a}^{b}D_{a+}^{\alpha,t'}D_{b-}^{1-\alpha,t}W_{b-}(t,\varphi_{t'})|_{t'=t}\dt
\\
& = -\frac{1}{\Gamma(\alpha)\Gamma(1-\alpha)}\left\{ \int_{a}^{b}\frac{W_{b-}(t,\varphi_{t})}{(b-t)^{1-\alpha}(t-a)^{\alpha}}\dt\right.\\
&\left.
+\alpha\int_{a}^{b}\int_{a}^{t}\frac{W_{b-}(t,\varphi_{t})-W_{b-}(t,\varphi_{r})}
{(b-t)^{1-\alpha}(t-r)^{\alpha+1}}\dr\dt\right. \\
   &+ (1-\alpha)\int_{a}^{b}\int_{t}^{b}\frac{W(t,\varphi_{t})-W(s,\varphi_{t})}
 {(s-t)^{2-\alpha}(t-a)^{\alpha}}\ds\dt \\
   & + \left.\alpha(1-\alpha)\int_{a}^{b}\int_{a}^{t}\int_{t}^{b}
 \frac{W(t,\varphi_{t})-W(s,\varphi_{t})-W(t,\varphi_{r})+W(s,\varphi_{r})}{(s-t)^{2-\alpha}
 (t-r)^{\alpha+1}}\ds\dr\dt\right\},
\end{split}
\end{equation}
where $W_{b-}\left(t,x\right)=W\left(t,x\right)-W\left(b,x\right)$.
Moreover, there is a    universal constant $\kappa$ depending only on $\tau, \la$ and
 $\al$, but independent $W$,   $\varphi$ and $a$,  $b$ such that
 \begin{align}
\left| \int_{a}^{b}W(\dt,\varphi_{t})\right|
\le   \kappa  \|W\|_{\tau, \la\,;  a, b} (b-a)^{\tau }
 +\kappa  \|W\|_{\tau, \la\,;  a, b}
\|\varphi\|_{\ga\,;a, b}^\la (b-a)^{\tau+\la \ga}\,.\label{holder}
\end{align}
\end{theorem}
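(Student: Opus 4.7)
The plan is to: (i) expand the nested Weyl derivatives in \eqref{eqn.def.fintw} to derive the four-term representation \eqref{eq:def.w}; (ii) estimate each of the four integrals under the regularity hypotheses, which simultaneously establishes absolute convergence and yields \eqref{holder}; (iii) deduce independence from $\alpha$ by a smoothing argument.

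For step (i), I would substitute \eqref{e.2.3} for $D_{a+}^{\alpha,t'}$ and \eqref{e.2.4} for $D_{b-}^{1-\alpha,t}$ into \eqref{eqn.def.fintw} and apply them one after the other. Applying $D_{b-}^{1-\alpha,t}$ to $W_{b-}(t,\varphi_{t'})$ (acting in $t$ with $t'$ frozen) produces a boundary term plus an integral in $s$ over $(t,b)$. Applying $D_{a+}^{\alpha,t'}$ to each of those two pieces splits each into a boundary plus an integral in $r$ over $(a,t')$. Setting $t'=t$ in the four resulting pieces, combining the sign factors via $(-1)^{\alpha}(-1)^{1-\alpha}=-1$, and folding in the constants $1/\Gamma(\alpha)$ and $1/\Gamma(1-\alpha)$ reproduces precisely the four summands of \eqref{eq:def.w}.

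For step (ii), I would bound the four integrands using the three regularity ingredients packaged into assumption \ref{cond.w}. Specifically, $|W_{b-}(t,\varphi_t)|\le \|W\|_{\tau,\lambda}(b-t)^{\tau}$ handles the first; $|W_{b-}(t,\varphi_t)-W_{b-}(t,\varphi_r)|\le \|W\|_{\tau,\lambda}\|\varphi\|_{\gamma}^{\lambda}(b-t)^{\tau}|t-r|^{\lambda\gamma}$ handles the second and forces $\alpha<\lambda\gamma$; $|W(t,\varphi_t)-W(s,\varphi_t)|\le \|W\|_{\tau,\lambda}|s-t|^{\tau}$ handles the third and forces $\alpha>1-\tau$; and the quadruple difference is bounded by $[W]_{\tau,\lambda}\|\varphi\|_{\gamma}^{\lambda}|s-t|^{\tau}|t-r|^{\lambda\gamma}$ for the fourth. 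The resulting singular integrals reduce after the rescaling $t=a+(b-a)u$ (and analogous changes of variable for $r$ and $s$) to classical beta-function integrals, whose convergence is exactly $1-\tau<\alpha<\lambda\gamma$; nonemptiness of this interval is the hypothesis $\lambda\gamma+\tau>1$. Tallying the powers of $(b-a)$ produced by the rescalings yields precisely the two scalings $(b-a)^{\tau}$ and $(b-a)^{\tau+\lambda\gamma}$ appearing in \eqref{holder}.

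For step (iii), I would approximate $W$ and $\varphi$ by smooth $W^n$ and $\varphi^n$ obtained by mollification. When $W^n$ is $C^1$ in the first variable and $\varphi^n$ is $C^1$, the classical identity $\int_a^b W^n(\dt,\varphi^n_t)=\int_a^b \partial_1 W^n(t,\varphi^n_t)\,\dt$ holds, and one checks by a parametric extension of Proposition \ref{p1} (with Lemma \ref{lem:ifst} used to commute the left- and right-sided derivatives that appear) that the right-hand side of \eqref{eqn.def.fintw} agrees with this classical value for every admissible $\alpha$, hence is $\alpha$-independent. The estimate from step (ii), applied to differences $W^n-W^m$ with the natural extended seminorms, shows the right-hand side of \eqref{eqn.def.fintw} is jointly continuous in $(W,\varphi)$ in the H\"older topology, so $\alpha$-independence passes to the limit. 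The main obstacle I anticipate is the quadruple-difference integral in \eqref{eq:def.w}: the two singularities $(s-t)^{\alpha-2}$ and $(t-r)^{-\alpha-1}$ are barely integrable and must be compensated \emph{simultaneously} by the two-parameter increment of $W$, which is precisely why the mixed seminorm $[W]_{\tau,\lambda}$, rather than the two one-parameter H\"older norms by themselves, is unavoidable in the hypothesis.
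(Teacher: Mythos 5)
Your steps (i) and (ii) coincide with the paper's own proof: the authors expand $D_{b-}^{1-\alpha,t}$ first and then $D_{a+}^{\alpha,t'}$, set $t'=t$, and obtain exactly your four terms $I_1,\dots,I_4$ in \eqref{e.3.17}; they then bound them with the same three ingredients of \ref{cond.w} (plus \ref{cond.phi}), getting $(b-a)^{\tau}$ for $I_1,I_3$ and $(b-a)^{\tau+\lambda\gamma}$ for $I_2,I_4$, with convergence precisely for $1-\tau<\alpha<\lambda\gamma$. Where you genuinely diverge is step (iii). The paper proves $\alpha$-independence directly, without any approximation: writing $\beta=\alpha'-\alpha$, it inserts $I_{a+}^{\beta,t}D_{a+}^{\beta,t}$, uses Lemma \ref{lem:ifst} to trade $I_{a+}^{\beta,t}$ acting in $t$ for $(-1)^{\beta}I_{b-}^{\beta,t'}$ acting in $t'$ under the $dt$ integral, and then uses the composition rules $D_{a+}^{\beta}D_{a+}^{\alpha}=D_{a+}^{\alpha+\beta}$ and $I_{b-}^{\beta}D_{b-}^{1-\alpha}=D_{b-}^{1-\alpha'}$ to land on the $\alpha'$ expression. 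This is a three-line computation once the integrability from step (ii) is in hand.

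Your mollification route, as written, has concrete gaps. First, smooth functions are not dense in $C^{\beta}$ (the paper even remarks the H\"older space is not separable), so $\|W-W^{n}\|_{\tau,\lambda}$ and $\|\varphi-\varphi^{n}\|_{\gamma}$ need not tend to zero; you must pass to slightly smaller exponents $\tau',\lambda',\gamma'$ still satisfying $\lambda'\gamma'+\tau'>1$ with $\alpha$ in the corresponding open interval, which you do not address. Second, your claim that the step (ii) estimate applied to $W^{n}-W^{m}$ gives joint continuity in $(W,\varphi)$ is not correct: the expression is linear in $W$, so that estimate controls the $W$-perturbation only; stability in $\varphi$ is a genuinely separate (and harder) matter, of the type of Proposition \ref{prop.int.phi12}, which is not proved in this paper. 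This could be sidestepped by mollifying $W$ only and leaving $\varphi$ fixed, but your plan mollifies both. Third, the statement that for $C^{1}$ data the right-hand side of \eqref{eqn.def.fintw} equals $\int_a^b\partial_1 W^{n}(t,\varphi^{n}_t)\,dt$ for every admissible $\alpha$ is not an instance of Proposition \ref{p1} (the derivatives act on two different time slots of $W_{b-}(t,\varphi_{t'})$ before setting $t'=t$); proving this ``parametric extension'' amounts to essentially the same fractional-calculus manipulation the paper performs directly, so the approximation detour does not save work and, uncorrected, leaves the $\alpha$-independence unproved.
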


\begin{proof}
We denote $\|W\|=\|W\|_{\tau,\lambda;a,b}$. First by the definitions
of fractional derivatives \eref{e.2.3} and \eref{e.2.4}, we have
\[
D_{b-}^{1-\alpha,t}W_{b-}(t,\varphi_{t'})=\frac{(-1)^{1-\alpha}}{\Gamma(\alpha)}
\left(\frac{W_{b-}(t,\varphi_{t'})}{(b-t)^{1-\alpha}}+(1-\alpha)\int_{t}^{b}\frac{W(t,
\varphi_{t'})-W(s,\varphi_{t'})}{(s-t)^{2-\alpha}}\ds\right)\,.
\]
and
\begin{align*}
&D_{a+}^{\alpha,t'}D_{b-}^{1-\alpha,t}W_{b-}(t,\varphi_{t'})\\
&=\frac{(-1)^{1-\al} }{\Gamma(\alpha)\Gamma(1-\alpha)}\left(\frac{1}{(t'-a)^{\alpha}}\frac{W_{b-}
(t,\varphi_{t'})}{(b-t)^{1-\alpha}}+\alpha\int_{a}^{t'}\frac{W_{b-}(t,\varphi_{t'})
-W_{b-}(t,\varphi_{r})}{(t'-r)^{\alpha+1}(b-t)^{1-\alpha}}\dr\right.\\
&\quad+\frac{1-\alpha}{(t'-a)^{\alpha}}\int_{t}^{b}\frac{W(t,\varphi_{t'})
-W(s,\varphi_{t'})}{(s-t)^{2-\alpha}}\ds\\
&\quad\left.\negmedspace+(1-\alpha)\int_{a}^{t'}\frac{\alpha}
{(t'-r)^{\alpha+1}}\int_{t}^{b}\frac{W(t,\varphi_{t'})-W(s,\varphi_{t'})-W(t,\varphi_{r})
+W(s,\varphi_{r})}{(s-t)^{2-\alpha}}\ds\dr\right).
\end{align*}
Thus the right hand side of \eqref{eqn.def.fintw} is
\begin{align}
&  -\frac{1}{\Gamma(\alpha)\Gamma(1-\alpha)}\left\{ \int_{a}^{b}\frac{W_{b-}(t,\varphi_{t})}{(b-t)^{1-\alpha}(t-a)^{\alpha}}\dt
+\alpha\int_{a}^{b}\int_{a}^{t}\frac{W_{b-}(t,\varphi_{t})-W_{b-}(t,\varphi_{r})}
{(b-t)^{1-\alpha}(t-r)^{\alpha+1}}\dr\dt\right.\nonumber \\
 & \quad + (1-\alpha)\int_{a}^{b}\int_{t}^{b}\frac{W(t,\varphi_{t})-W(s,\varphi_{t})}
 {(s-t)^{2-\alpha}(t-a)^{\alpha}}\ds\dt\nonumber \\
 &  \quad + \left.\alpha(1-\alpha)\int_{a}^{b}\int_{a}^{t}\int_{t}^{b}
 \frac{W(t,\varphi_{t})-W(s,\varphi_{t})-W(t,\varphi_{r})+W(s,\varphi_{r})}{(s-t)^{2-\alpha}
 (t-r)^{\alpha+1}}\ds\dr\dt\right\} \nonumber\\
 &=:I_1+I_2+I_3+I_4\,.\label{e.3.17}
\end{align}
The condition \ref{cond.w} implies
\begin{align}
I_1
&\le \kappa   \|W\|
 \int_a^b (b-t)^{\tau+\al-1}(t-a)^{-\al}\dt\nonumber\\
&=\kappa  \|W\|    (b-a)^\tau\,. \label{I1}
\end{align}
Similarly, we also have
\begin{align}
I_3
&\le \kappa   \|W\|    \int_a^b \int_t^b (s-t)^{\tau +\al-2} (t-a)^{-\al} \ds\dt\nonumber\\
&\le \kappa   \|W\|    (b-a)^{\tau } \,. \label{I3}
\end{align}
The assumptions \ref{cond.w} and \ref{cond.phi} also imply
\begin{align*}
 |W_{b-}(t, \varphi_t)-W_{b-}(t, \varphi_r)|&\le \kappa   \|W\|
   |b-t|^\tau  |\varphi_t-\varphi_r|^\la \\
& \le
\kappa   \|W\|    \|\varphi\|_\ga ^\la   |b-t|^\tau
|t-r|^{\la \ga}\,.
\end{align*}
This implies
\begin{align}
I_2
&\le  \kappa   \|W\|   \|\varphi\|_\ga ^\la  \int_a^b \int_a^t ((b-t)^{\tau+\al-1} (t-r)^{\la \ga-\al-1} \dr\dt\nonumber\\
&\le \kappa   \|W\|    \|\varphi\|_\ga ^\la     (b-a)^ {\tau+\la \ga}\,. \label{I2}
\end{align}
Using
\begin{align*}
|W(t,\varphi_{t})-W(s,\varphi_{t})-W(t,\varphi_{r})+W(s,\varphi_{r})|
\le \kappa   \|W\|    \|\varphi\|_\ga ^\la    |t-s|^{\tau}|t-r|^{\la \ga}\,,
\end{align*}
we can estimate
$I_4$   as follows.
\begin{align}
I_4
 &\le
\kappa   \|W\| \|\varphi\|_\ga ^\la    \int_{a}^{b}\int_{a}^{t}\int_{t}^{b}
 \frac{|t-s|^\tau  |t-r|^{\la \ga} } {(s-t)^{2-\alpha}
 (t-r)^{\alpha+1}} \ds\dr\dt \nonumber\\
 &\le \kappa   \|W\| \|\varphi\|_\ga ^\la     (b-a)^{\tau+ \la \ga  }\,. \label{I4}
\end{align}
The inequalities \eref{I1}-\eref{I4} imply  that for any $\al\in (1-\tau, \ga\la)$,
the right  hand side of \eref{eqn.def.fintw} is well-defined.  The inequalities \eref{I1}-\eref{I4}
also yield
\eref{holder}.

To show \eref{eq:def.w} is independent of $\al$ we suppose $\alpha',\alpha\in(1-\tau,\lambda\gamma)$, $\alpha'>\alpha$.
Denote   $\beta=\alpha'-\alpha.$ Using Lemma \ref{lem:ifst}, it
is straightforward to see that
\begin{align*}
&
(-1)^{\alpha}\int_{a}^{b}D_{a+}^{\alpha,t}D_{b-}^{1-\alpha,t'}W_{b-}(t,\varphi_{t'})|_{t'=t}\dt \\
& =  (-1)^{\alpha}\int_{a}^{b}I_{a+}^{\beta,t}D_{a+}^{\beta,t}D_{a+}^{\alpha,t}D_{b-}^{1-\alpha,t'}
W_{b-}(t,\varphi_{t'})|_{t'=t}\dt\\
 & = (-1)^{\alpha+\beta}\int_{a}^{b}I_{b-}^{\beta,t'}D_{a+}^{\alpha+\beta,t}D_{b-}^{1-\alpha,t'}
 W_{b-}(t,\varphi_{t'})|_{t'=t}\dt\\
 & = (-1)^{\alpha'}\int_{a}^{b}D_{a+}^{\alpha',t}I_{b-}^{\beta,t'}D_{b-}^{1-\alpha,t'}
 W_{b-}(t,\varphi_{t'})|_{t'=t}\dt\\
 & = (-1)^{\alpha'}\int_{a}^{b}D_{a+}^{\alpha',t}D_{b-}^{1-\alpha',t'}W_{b-}(t,\varphi_{t'})|_{t'=t}\dt\,.
\end{align*}
This proves the theorem.
\end{proof}

Now we can improve the equality \eref{holder} as in the following
theorem

\begin{theorem}\label{thm.intW}  Let the assumptions
\ref{cond.w} and \ref{cond.phi} be satisfied.  Let $a,b,c$ be real numbers such that $a\le c\le b$.
Then there is a constant
$\kappa$   depending only on $\tau, \la$ and
 $\al$, but independent $W$,   $\varphi$ and $a$,  $b$, $c$
 such that
\begin{equation}
\label{est.W.c}
\left| \int_{a}^{b}W(\dt,\varphi_{t})-W(b,\varphi_c)+W(a,\varphi_c) \right|
\le\kappa  \|W\|_{\tau, \la\,;  a, b}
 \|\varphi\|_{\ga\,;a, b}^\la (b-a)^{\tau+\la \ga}\,.
\end{equation}
\end{theorem}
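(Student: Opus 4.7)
The plan is to reduce \eqref{est.W.c} to the estimates already carried out in the proof of Theorem~\ref{fractional}, applied to a suitably ``centered'' kernel that vanishes at the reference point $x=\varphi_c$. Concretely, set
\[
\bar W(t,x) := W(t,x)-W(t,\varphi_c).
\]
Because $\bar W(t,\varphi_c)=0$ for every $t$, every pure temporal increment of $\bar W$ evaluated along $\varphi$ is automatically controlled by the mixed modulus of $W$; for instance,
\[
|\bar W(t,\varphi_t)-\bar W(s,\varphi_t)|=|W(t,\varphi_t)-W(s,\varphi_t)-W(t,\varphi_c)+W(s,\varphi_c)|\le [W]_{\tau,\la}\,|t-s|^{\tau}|\varphi_t-\varphi_c|^{\la},
\]
and analogously $|\bar W_{b-}(t,\varphi_t)|\le [W]_{\tau,\la}(b-t)^{\tau}|\varphi_t-\varphi_c|^{\la}$. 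The extra factor $|\varphi_t-\varphi_c|^{\la}\le \|\varphi\|_{\ga}^{\la}(b-a)^{\la\ga}$ is precisely the gain that promotes $(b-a)^{\tau}$ to $(b-a)^{\tau+\la\ga}$.

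I would next exploit the linearity of Definition~\ref{defn.fracint} in $W$ to split
\[
\int_a^b W(\dt,\varphi_t)=\int_a^b \bar W(\dt,\varphi_t)+\int_a^b W_0(\dt,\varphi_t),\qquad W_0(t,x):=W(t,\varphi_c).
\]
For the $x$-independent kernel $W_0$, the mixed terms $I_2$ and $I_4$ of the representation \eqref{eq:def.w} vanish identically, and the remaining sum $I_1+I_3$ can be identified, via Lemma~\ref{lemma.fdg} applied with $f\equiv 1$ and $g(t)=W(t,\varphi_c)$, with the classical Young integral $\int_a^b \d[W(t,\varphi_c)]=W(b,\varphi_c)-W(a,\varphi_c)$. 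Therefore
\[
\int_a^b W(\dt,\varphi_t)-W(b,\varphi_c)+W(a,\varphi_c)\;=\;\int_a^b \bar W(\dt,\varphi_t),
\]
and the problem reduces to bounding $|\int_a^b \bar W(\dt,\varphi_t)|$.

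To do so I would expand this last integral as $I_1+I_2+I_3+I_4$ via \eqref{eq:def.w} and repeat the four estimates in the proof of Theorem~\ref{fractional} verbatim, but with $\bar W$ in place of $W$. The terms $I_2(\bar W)$ and $I_4(\bar W)$ coincide with $I_2(W)$ and $I_4(W)$ because the $\varphi$-differences inside them annihilate the $W(\cdot,\varphi_c)$ piece, so \eqref{I2} and \eqref{I4} already yield $\kappa\|W\|_{\tau,\la}\|\varphi\|_{\ga}^{\la}(b-a)^{\tau+\la\ga}$. For $I_1(\bar W)$ and $I_3(\bar W)$, the two mixed-modulus bounds displayed above, combined with $|\varphi_t-\varphi_c|^{\la}\le\|\varphi\|_{\ga}^{\la}(b-a)^{\la\ga}$, replace the naked $\|W\|_{\tau,\la}(b-a)^{\tau}$ that came out of \eqref{I1} and \eqref{I3} by $\|W\|_{\tau,\la}\|\varphi\|_{\ga}^{\la}(b-a)^{\tau+\la\ga}$; the Beta-type integrals in $t$ are exactly the ones computed there.

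The main point requiring care is the reduction $\int_a^b W_0(\dt,\varphi_t)=W(b,\varphi_c)-W(a,\varphi_c)$ for an $x$-independent integrand. This should follow by inserting $W_0$ directly into \eqref{eqn.def.fintw}: both fractional derivatives then act on functions of the single variable $t$, and the resulting expression is precisely the right-hand side of \eqref{eq.young} with $f\equiv 1$ and $g(t)=W(t,\varphi_c)$. Once this identification is in place, the remainder of the argument is a bookkeeping repetition of the four estimates in Theorem~\ref{fractional}.
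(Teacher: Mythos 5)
Your proposal is correct and follows essentially the same route as the paper: subtract the centered kernel $W(t,x)-W(t,\varphi_c)$, reuse the four-term fractional-calculus decomposition from Theorem \ref{fractional}, note that the mixed terms are unchanged, and upgrade the two remaining terms via the rectangular increment bound $|W(t,\varphi_t)-W(s,\varphi_t)-W(t,\varphi_c)+W(s,\varphi_c)|\le \|W\|_{\tau,\la}\|\varphi\|_\ga^\la|t-s|^\tau(b-a)^{\la\ga}$. The only cosmetic difference is how the $x$-independent piece is identified with the increment $W(b,\varphi_c)-W(a,\varphi_c)$: you apply \eqref{eq.young} with $f\equiv1$, $g(t)=W(t,\varphi_c)$, while the paper evaluates the integral along a (piecewise) constant path via an indicator-function computation — the two identifications amount to the same thing.
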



\begin{proof}
Let $a\le c<d\le b$ and let $\tilde \varphi(t)=\varphi(c) \chi_{[c, d)}(t)$, where $\chi_{[c, d)}$ is the indicate  function
on $[c, d)$. Then
\[
W(t, \tilde \varphi(t'))=\begin{cases}
W(t,   \varphi(c) )&c\le t'<d\\ \\
W(t,0)& \hbox{elsewhere}  \,.
\end{cases}
\]
This means $W(t, \tilde \varphi(t'))=W(t, \varphi(c) ) \chi_{[c, d)}(t')$.
Hence, from \eref{eq.young}  we have
\begin{align*}
\int_a^b W(\dt, \tilde \varphi(t))
&=(-1)^{\alpha}\int_{a}^{b} D_{b-}^{1-\alpha,t}W_{b-}(t, \varphi(c)) D_{a+}^{\alpha,t'} \chi_{[c, d)}(t')
|_{t'=t}\dt\\
&=(-1)^{\alpha}\int_{a}^{b} D_{b-}^{1-\alpha,t}W_{b-}(t, \varphi(c)) D_{a+}^{\alpha,t} \chi_{[c, d)}(t)
\dt\\
&= W(d, \varphi(c))-W(c, \varphi(c))\,.
\end{align*}

Let $c$ be any point in $ [a, b]$.
Denote $\tilde W(t,x)=W(t,x)-W(t, \varphi_c)$.  Then $\tilde W$ satisfies
 \ref{cond.w}.  As in the equation \eref{e.3.17}, we have
\begin{align*}
\int_a^b W(\dt, \varphi_t) -W(b,\varphi_c)+W(a,\varphi_ c)
&=\int_a^b\tilde W(\dt, \varphi_t)\\
&=\tilde I_1+\tilde I_2+\tilde I_3+\tilde I_4\,.
\end{align*}
where $\tilde I_2 =I_2$ and $\tilde I_4 =I_4$ are the same as $I_2$ and $I_4$
in the proof of Theorem \ref{fractional}.     But
\begin{align*}
\tilde I_1& =  -\frac{1}{\Gamma(\alpha)\Gamma(1-\alpha)} \int_{a}^{b}\frac{W(t,\varphi_{t})-W(b,\varphi_{t})-W(t,\varphi_{c})+W(b,\varphi_{c})}{(b-t)^{1-\alpha}(t-a)^{\alpha}}\dt
 \\
\tilde I_3 &=  -\frac{(1-\alpha) }{\Gamma(\alpha)\Gamma(1-\alpha)} \int_{a}^{b}\int_{t}^{b}\frac{W(t,\varphi_{t})-W(s,\varphi_{t})-W(t,\varphi_{c})+W(s,\varphi_{c})}
 {(s-t)^{2-\alpha}(t-a)^{\alpha}}\ds\dt\,.
\end{align*}
From the assumptions  \ref{cond.w} and \ref{cond.phi} we see that
\begin{eqnarray*}
&&|W(t,\varphi_{t})-W(b,\varphi_{t})-W(t,\varphi_{c})+W(b,\varphi_{c})|\\
&&\qquad \le \kappa \|W\|_{\tau, \la\,;  a, b}
  \|\varphi\|_{\ga\,;a, b}^\la |b-t|^{\tau}|t-c|^{\la \ga}\\
&&\qquad \le  \kappa \|W\|_{\tau, \la\,;  a, b}
\|\varphi\|_{\ga\,;a, b}^\la |b-t|^{\tau}|t-a|^{\la \ga}\,.
\end{eqnarray*}
This implies that
\begin{equation}
\tilde I_1\le \kappa  \|W\|_{\tau, \la\,;  a, b} \|\varphi\|_{\ga\,;a, b}^\la (b-a)^{\tau+\la \ga}\,.
\label{tildei1}
\end{equation}
Similarly, we have
\begin{equation}
\tilde I_3\le \kappa  \|W\|_{\tau, \la\,;  a, b}\|\varphi\|_{\ga\,;a, b}^\la (b-a)^{\tau+\la \ga}\,.\label{tildei3}
\end{equation}
Combining these two inequalities
\eref{tildei1} and \eref{tildei3} with the inequalities \eref{I2} and \eref{I4} we have
\[
\left|\int_a^b \tilde W(\dt, \varphi_t)\right|\le
\kappa  \|W\|_{\tau, \la\,;  a, b}  \|\varphi\|_{\ga\,;a, b}^\la (b-a)^{\tau+\la \ga}\,,
\]
which yields \eqref{est.W.c}.
\end{proof}
\begin{theorem}\label{thm.intWl}  Let the assumptions
\ref{cond.w}   be satisfied.  Let $\varphi: [a, b]\rightarrow \RR^d$
satisfy
\begin{equation}
\left|\varphi(s)-\varphi(a)\right|\le  L   |s-a|^\ell \quad \forall s\in [a, b]\
\qquad{\rm and}\qquad \sup_{a\le t<s\le b}\frac{|\varphi(s)-\varphi(t)|}{
(s-t)^\gamma}\le L
\label{e.biggamma}
\end{equation}
for some  $\ell  \in (\gamma, \infty)$ and for some constant $L\in(0, \infty)$.
If $\tau+\la \gamma > 1$,  then for any $\be < 1+\frac{\la\ga+\tau-1}{\ga}\ell $
we have
\begin{equation}
\label{est.W.c-add}
\left| \int_{a}^{b}W(dt,\varphi_{t}) -W(b,\varphi_a)+W(a,\varphi_a)  \right| \le C  (b-a)^{\be }\,,
\end{equation}
here the constant $C$ does not depend on $b-a$.
\end{theorem}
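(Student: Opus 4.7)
The plan is to split $[a,b]$ into $n$ equal subintervals, apply Theorem \ref{thm.intW} on each piece with the left endpoint as the auxiliary point $c$, telescope, and finally optimize $n$ by exploiting the enhanced regularity of $\varphi$ at the point $a$. Let $h=b-a$, let $n\ge 1$ be an integer to be chosen, and set $t_k=a+kh/n$ for $k=0,\ldots,n$. Since $\|\varphi\|_{\gamma;t_k,t_{k+1}}\le L$, Theorem \ref{thm.intW} applied on $[t_k,t_{k+1}]$ with $c=t_k$ gives
\begin{equation*}
\int_{t_k}^{t_{k+1}}W(\dt,\varphi_t)=W(t_{k+1},\varphi_{t_k})-W(t_k,\varphi_{t_k})+R_k,\qquad |R_k|\le \kappa\,\|W\|_{\tau,\lambda;a,b}\, L^\lambda (h/n)^{\tau+\lambda\gamma}.
\end{equation*}

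Next I would subtract the trivial telescoping identity $\sum_{k}[W(t_{k+1},\varphi_a)-W(t_k,\varphi_a)]=W(b,\varphi_a)-W(a,\varphi_a)$ to obtain
\begin{equation*}
\int_a^b W(\dt,\varphi_t)-W(b,\varphi_a)+W(a,\varphi_a)=\sum_{k=0}^{n-1}\Delta_k+\sum_{k=0}^{n-1}R_k,
\end{equation*}
where $\Delta_k=W(t_{k+1},\varphi_{t_k})-W(t_k,\varphi_{t_k})-W(t_{k+1},\varphi_a)+W(t_k,\varphi_a)$. Condition \ref{cond.w} controls the rectangle increment by $[W]_{\tau,\lambda;a,b}(h/n)^\tau|\varphi_{t_k}-\varphi_a|^\lambda$, and the enhanced initial estimate $|\varphi_{t_k}-\varphi_a|\le L(t_k-a)^\ell$ then yields $|\Delta_k|\le \|W\|_{\tau,\lambda;a,b}\, L^\lambda (h/n)^\tau(t_k-a)^{\ell\lambda}$. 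Summing, and using $\sum_{k=0}^{n-1}(kh/n)^{\ell\lambda}\le C h^{\ell\lambda}n$, the total error is dominated by
\begin{equation*}
C\left(h^{\tau+\ell\lambda}n^{1-\tau}+h^{\tau+\lambda\gamma}n^{1-\tau-\lambda\gamma}\right).
\end{equation*}

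Since $\tau+\lambda\gamma>1$ the second summand decreases in $n$ while the first is non-decreasing, so one balances the two by taking $n$ of order $h^{(\gamma-\ell)/\gamma}$, which is a growing power of $1/h$ precisely because $\ell>\gamma$; a direct computation shows that the balanced exponent is exactly $1+\ell(\tau+\lambda\gamma-1)/\gamma$. The main technical point will be that this optimal $n$ is generally not an integer: taking $n=\lceil h^{(\gamma-\ell)/\gamma}\rceil$ for small $h$, and handling the range $h\ge 1$ separately by applying Theorem \ref{thm.intW} with $n=1$ (absorbing the bounded factor into $C$), one loses an arbitrarily small amount in the exponent, which is exactly why the conclusion must be stated with the strict inequality $\beta<1+(\lambda\gamma+\tau-1)\ell/\gamma$.
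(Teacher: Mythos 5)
Your argument is correct, but it takes a genuinely different route from the paper's proof. The paper never leaves the fractional-calculus representation: it writes the left-hand side of \eqref{est.W.c-add} as $\tilde I_1+\tilde I_2+\tilde I_3+\tilde I_4$ exactly as in the proof of Theorem \ref{thm.intW}, bounds $\tilde I_1,\tilde I_3$ by $C(b-a)^{\tau+\lambda\ell}$, and for $\tilde I_2,\tilde I_4$ interpolates the two available estimates $J\le C|t-s|^{\tau}|t-r|^{\lambda\gamma}$ and $J\le C|t-s|^{\tau}|t-a|^{\lambda\ell}$ with weights $\beta_1+\beta_2=1$, getting the exponent $\tau+\beta_1\lambda\gamma+\beta_2\lambda\ell$ subject to $\beta_1\lambda\gamma>\alpha>1-\tau$; letting $\beta_1\lambda\gamma\downarrow 1-\tau$ yields every $\beta<1+\frac{\lambda\gamma+\tau-1}{\gamma}\ell$, the strictness reflecting the blow-up of the constants at the endpoint. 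You instead use Theorem \ref{thm.intW} (with $c$ the left endpoint) as a black box on each piece of a uniform partition, invoke additivity of the nonlinear integral (recorded in the paper just before Proposition \ref{prop.frac}), telescope against $t\mapsto W(t,\varphi_a)$, and balance the two sums $h^{\tau+\lambda\ell}n^{1-\tau}$ and $h^{\tau+\lambda\gamma}n^{1-\tau-\lambda\gamma}$ over the number of pieces $n$. This is more elementary and modular, and in fact slightly sharper than you claim: with $n=\lceil h^{(\gamma-\ell)/\gamma}\rceil$ and $h=b-a\le 1$ the rounding costs only a constant factor (the first sum is increasing in $n$, so use $n\le 2h^{(\gamma-\ell)/\gamma}$; the second is decreasing, so use $n\ge h^{(\gamma-\ell)/\gamma}$), so your method attains the endpoint exponent $1+\frac{\lambda\gamma+\tau-1}{\gamma}\ell$ itself for $b-a\le 1$ --- the strict inequality in the statement is forced by the paper's interpolation argument, not by your discretization, so your closing explanation of the strictness is slightly off though harmless. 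The one caveat, which your $n=1$ treatment of $h\ge 1$ shares with the paper's bounds for $\tilde I_1,\tilde I_3$ (whose fixed exponent $\tau+\lambda\ell$ also exceeds $\beta$), is that when $\beta<\tau+\lambda\gamma$ the asserted bound with a constant independent of $b-a$ is really a statement for bounded $b-a$; that is how the estimate is used later (small increments in the induction for iterated integrals), so it does not affect the substance of either proof.
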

\begin{proof} As in the proof of Theorem \ref{thm.intW} we express
$\int_{a}^{b}W(\dt,\varphi_{t})-W(b,\varphi_a)+W(a,\varphi_a) $ as the sum of the terms $\tilde I_j$, $j=1,2,3,4$ (we follow the notation there).
First, we explain how to proceed
with $\tilde I_4$.
We shall use $C$ to denote a generic constant independent of $b-a$. Denote
\[
J:=|W(t,\varphi_{t})-W(s,\varphi_{t})-W(t,\varphi_{r})+W(s,\varphi_{r})|
\]
First,  we know that   we have
\begin{equation}
J \le C     |t-s|^{\tau}|t-r|^{\la \gamma  }\,.\label{e.j1}
\end{equation}
On the other hand, we also have
\begin{eqnarray}
J
&\le& |W(t,\varphi_{t})-W(s,\varphi_{t})-W(t,\varphi_{a})+W(s,\varphi_{a})|
\nonumber \\
&&\qquad +
 |W(t,\varphi_{r})-W(s,\varphi_{r})-W(t,\varphi_{a})+W(s,\varphi_{a})|
 \nonumber \\
 &\le& C |t-s|^{\tau}\left[ |t-a|^{\la \ell} +|r-a|^{\la \ell} \right]\nonumber \\
 &\le&  C |t-s|^{\tau}  |t-a|^{\la \ell} \,\label{e.j2}
\end{eqnarray}
when $a\le r<t<s\le b$.  Therefore, from \eref{e.j1} and \eref{e.j2}
it follows that  for any $\be_1\ge 0$ and $\be_2\ge 0$ with
$\be_1+\be_2=1$, we have
\[
J\le C  |t-s|^{\tau}|t-r|^{\be _1\la \ga}  |t-a|^{\be_2 \la \ell }
\]
If we choose $\al$ and $\be_1$ such that
\begin{equation}
\tau+\al>1\,,\qquad \be_1\la \gamma-\al>0 
\label{e.al-tau-beta}
\end{equation}
then
\[
\tilde I_4\le C (b-a)^{\be_1\la \gamma+\be_2\la \ell +\tau}\,.
\]
For any $\be < 1+\frac{\la\ga+\tau-1}{\ga}\ell $
we can choose $\al$,  $\be_1$, and $\be_2$ such that
 \eref{e.al-tau-beta} is satisfied  and
 \[
\tilde I_4\le C (b-a)^{\be}\,.
\]
The   term $\tilde I_2$   can be handled in a similar   but easier way and similar bound can be obtained.

Now, let us consider  $\tilde I_3$.  We have
\begin{equation*}
|W(t, \varphi_t)-W(s, \varphi_t)-W(t, \varphi_a)+W(s, \varphi_a)| \le C |t-s|^\tau |t-a|^{\la \ell}\,.
\end{equation*}
This   easily  yields
\[
\tilde I_3\le C (b-a)^{\tau+\la \ell}\,.
\]
Similar estimate holds true for $\tilde I_1$.  However, it is easy to verify
$\tau+\la \ell>1+\frac{\la\ga+\tau-1}{\ga}\ell$ if $\ell> \gamma$.
The theorem is proved.
\end{proof}

For every $s,t$ in $[a,b]$, we put $\mu(s,t)=W(t,\varphi_s)-W(s,\varphi_s)$.
 Let $\pi=\{a=t_0<t_1<\cdots<t_n=b\}$ be a partition
of $[a,b]$ with mesh size $|\pi|=\max_{1\le i\le n }|t_{i}-t_{i-1}|$, one can consider the limit of the Riemann sums
\begin{equation*}
    \lim_{|\pi|\downarrow0}\sum_{i=1}^n \mu(t_{i-1},t_{i})
\end{equation*}
whenever it exists. A sufficient condition for convergence of the Riemann sums is provided by following two results of \cite{FeyelPradelle06}.
\begin{lemma}[The sewing map]\label{sew.lem}
  Let $\mu$ be a continuous function on $[0,T]^2$ with values in a Banach space $B$  and $\ep>0$. Suppose that $\mu$ satisfies
  \[
 |\mu(a,b)-\mu(a,c)-\mu(c,b)|\le K|b-a|^{1+\ep}\quad \forall \ 0\le a\le c\le b\le T \,.
  \]
  Then there exists a function $\JJ\mu(t)$ unique up to an additive constant such that
  \begin{equation}\label{est.sew}
  |\JJ\mu(b)-\JJ\mu(a)-\mu(a,b)|\le K (1-2^{-\ep})^{-1} |b-a|^{1+\ep}\quad \forall \  0\le a\le b\le T\,.
  \end{equation}
\end{lemma}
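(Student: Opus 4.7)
The plan is to build $\JJ\mu$ by a dyadic-refinement limit of Riemann sums and then read off the bound \eqref{est.sew} from a geometric telescope. Fix $0\le a\le b\le T$ and, for each integer $n\ge0$, let $t^n_k:=a+k(b-a)2^{-n}$ for $0\le k\le 2^n$ and set
\[
S_n(a,b):=\sum_{k=0}^{2^n-1}\mu(t^n_k,t^n_{k+1}).
\]
The passage from $S_n$ to $S_{n+1}$ replaces each term $\mu(t^n_k,t^n_{k+1})$ by $\mu(t^n_k,m_k)+\mu(m_k,t^n_{k+1})$, with $m_k$ the midpoint. By the standing hypothesis each difference is bounded by $K\,((b-a)2^{-n})^{1+\ep}$, and there are $2^n$ of them, giving
\[
|S_n(a,b)-S_{n+1}(a,b)|\le K(b-a)^{1+\ep}\,2^{-n\ep}.
\]

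Summing this geometric series shows that $(S_n(a,b))_n$ is Cauchy in $B$; call its limit $\Phi(a,b)$. The same telescope yields
\[
|\Phi(a,b)-\mu(a,b)|=|\Phi(a,b)-S_0(a,b)|\le K(1-2^{-\ep})^{-1}(b-a)^{1+\ep},
\]
which is exactly the bound we want. In particular, setting $c=a$ in the hypothesis gives $|\mu(a,a)|\le K|b-a|^{1+\ep}$ for every $b\ge a$, hence $\mu(a,a)=0$, and continuity of $\mu$ transfers to $\Phi$.

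Next I would establish the crucial additivity $\Phi(a,c)+\Phi(c,b)=\Phi(a,b)$ for every $c\in[a,b]$, and then define $\JJ\mu(t):=\Phi(0,t)$ so that $\JJ\mu(b)-\JJ\mu(a)=\Phi(a,b)$, which is \eqref{est.sew}. The additivity is the main obstacle, because dyadic partitions of $[a,c]$ and $[c,b]$ do not concatenate into a dyadic partition of $[a,b]$ when $c$ is not a dyadic rational; the cleanest route is to prove once and for all that the Riemann sum $S(\pi)$ associated with any partition $\pi$ of $[a,b]$ satisfies $|S(\pi)-\Phi(a,b)|\le K(1-2^{-\ep})^{-1}|\pi|^{\ep}(b-a)$, by inserting points one at a time and summing the geometric bound coming from the hypothesis. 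Then sending the mesh to zero on the two halves identifies the limits, giving additivity. Continuity of $\JJ\mu$ follows from \eqref{est.sew} together with the vanishing of $\mu$ on the diagonal.

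For uniqueness, suppose $F:[0,T]\to B$ is another continuous function with $|F(b)-F(a)-\mu(a,b)|\le K'|b-a|^{1+\ep}$ for some constant $K'$. Then $H:=F-\JJ\mu$ is continuous and by the triangle inequality
\[
|H(b)-H(a)|\le\bigl(K'+K(1-2^{-\ep})^{-1}\bigr)|b-a|^{1+\ep}.
\]
Inserting $n-1$ equally spaced points in $[a,b]$ and summing this estimate along the telescope yields $|H(b)-H(a)|\le C(b-a)^{1+\ep}n^{-\ep}\to 0$, so $H$ is constant. This proves that $\JJ\mu$ is unique up to an additive constant.
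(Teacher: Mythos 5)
First, a point of comparison: the paper does not prove this lemma at all --- it is imported from Feyel and de La Pradelle \cite{FeyelPradelle06} --- so there is no internal proof to measure yours against. On its own terms, your construction is the standard dyadic sewing argument and most of it is sound: the telescoping bound $|S_n(a,b)-S_{n+1}(a,b)|\le K(b-a)^{1+\ep}2^{-n\ep}$ is correct, the limit $\Phi(a,b)$ satisfies $|\Phi(a,b)-\mu(a,b)|\le K(1-2^{-\ep})^{-1}(b-a)^{1+\ep}$ with exactly the constant required in \eqref{est.sew}, and your uniqueness argument (subdividing $[a,b]$ into $n$ equal pieces and letting $n\to\infty$) is complete; it does not even need continuity of the competitor.

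The genuine gap is the additivity step, which you correctly single out as the main obstacle but then only assert. The inequality $|S(\pi)-\Phi(a,b)|\le K(1-2^{-\ep})^{-1}|\pi|^{\ep}(b-a)$ does not follow from ``inserting points one at a time'': if a refinement puts $m$ new points inside a single interval of $\pi$ of length $L$, inserting them in an arbitrary order only yields the bound $mKL^{1+\ep}$, which is useless as $m\to\infty$; there is no geometric decay unless each insertion happens to bisect an interval. The standard repair is Young's maximal-point-removal trick: within an interval of length $L$ carrying $m$ extra points, remove at each step a point whose two adjacent subintervals have total length at most $2L/m$; the accumulated cost is then at most $K\sum_{k\ge 1}(2/k)^{1+\ep}L^{1+\ep}=2^{1+\ep}\zeta(1+\ep)\,K\,L^{1+\ep}$. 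Summing over the intervals of $\pi$ gives $|S(\pi')-S(\pi)|\le cK|\pi|^{\ep}(b-a)$ for every refinement $\pi'$ of $\pi$, hence (via common refinements) convergence of arbitrary Riemann sums to the dyadic limit $\Phi(a,b)$; the constant $c$ is $2^{1+\ep}\zeta(1+\ep)$ rather than $(1-2^{-\ep})^{-1}$, but any finite constant suffices at this point, because \eqref{est.sew} itself already comes from your dyadic telescope. With this partition-comparison estimate in hand, concatenating partitions of $[a,c]$ and $[c,b]$ gives $\Phi(a,b)=\Phi(a,c)+\Phi(c,b)$, and $\JJ\mu(t):=\Phi(0,t)$ finishes the argument as you intended. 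So: right architecture, sharp constant recovered, complete uniqueness proof, but the crucial comparison of general Riemann sums with $\Phi$ must be proved with the correct ordering of insertions (or deletions), not merely asserted, and with a corrected constant.
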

We adopt the notation $\JJ_a^b \mu=\JJ \mu(b)-\JJ \mu(a)$
\begin{lemma}[Abstract Riemann sum]
  Let $\pi=\{a=t_0<t_1<\cdots<t_m=b\}$ be an arbitrary partition of $[a,b]$ with   $|\pi|=\sup_{i=0,\dots,m-1}|t_{i+1}-t_{i}|$.  Define the Riemann sum \[J_\pi=\sum_{i=0}^{m-1}\mu(t_i,t_{i+1})\] then $J_\pi$ converges to $\JJ_a^b\mu$ as $|\pi|\downarrow 0$ .
\end{lemma}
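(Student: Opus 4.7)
The plan is to piggy-back directly on the sewing Lemma \ref{sew.lem}. The estimate \eqref{est.sew} already compares $\JJ\mu(b)-\JJ\mu(a)$ with the single increment $\mu(a,b)$ on any subinterval, so the task is simply to telescope this bound over a partition.

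First I would invoke the fact that $\JJ\mu$ is a well-defined function on $[a,b]$ (unique up to an additive constant, which cancels in differences), so that for the given partition $\pi$ we have the trivial telescoping identity
\begin{equation*}
\JJ_a^b\mu = \sum_{i=0}^{m-1}\bigl(\JJ\mu(t_{i+1})-\JJ\mu(t_i)\bigr)= \sum_{i=0}^{m-1}\JJ_{t_i}^{t_{i+1}}\mu.
\end{equation*}
Subtracting $J_\pi$ and applying the triangle inequality gives
\begin{equation*}
\bigl|J_\pi-\JJ_a^b\mu\bigr|\le \sum_{i=0}^{m-1}\bigl|\JJ_{t_i}^{t_{i+1}}\mu-\mu(t_i,t_{i+1})\bigr|.
\end{equation*}

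Next I would apply the sewing estimate \eqref{est.sew} on each subinterval $[t_i,t_{i+1}]$, which yields
\begin{equation*}
\bigl|\JJ_{t_i}^{t_{i+1}}\mu-\mu(t_i,t_{i+1})\bigr|\le K(1-2^{-\ep})^{-1}|t_{i+1}-t_i|^{1+\ep}.
\end{equation*}
Factoring out one power of the mesh and using $\sum_{i}(t_{i+1}-t_i)=b-a$, this gives the bound
\begin{equation*}
\bigl|J_\pi-\JJ_a^b\mu\bigr|\le K(1-2^{-\ep})^{-1}|\pi|^{\ep}(b-a),
\end{equation*}
which tends to $0$ as $|\pi|\downarrow 0$.

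There is no real obstacle here: the argument is a one-line telescoping plus the sewing estimate. The only thing to be a bit careful about is the tacit use of additivity of $\JJ\mu$ (i.e.\ the fact that $\JJ_a^b\mu=\JJ_a^c\mu+\JJ_c^b\mu$), which follows immediately from the definition $\JJ_a^b\mu=\JJ\mu(b)-\JJ\mu(a)$, and the fact that the additive constant in the definition of $\JJ\mu$ is irrelevant because only differences $\JJ\mu(b)-\JJ\mu(a)$ appear.
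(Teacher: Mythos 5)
Your argument is correct: telescoping $\JJ_a^b\mu=\sum_{i}\JJ_{t_i}^{t_{i+1}}\mu$ and applying the sewing estimate \eqref{est.sew} on each subinterval (which is legitimate, since the estimate in Lemma \ref{sew.lem} holds uniformly for all pairs of points) gives $|J_\pi-\JJ_a^b\mu|\le K(1-2^{-\ep})^{-1}|\pi|^{\ep}(b-a)\to 0$. The paper itself offers no proof, quoting the result from Feyel--de La Pradelle, and your telescoping-plus-sewing argument is precisely the standard proof of that cited result, so there is nothing to add.
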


Because $\tau+\lambda \gamma$ is strictly greater than 1, the estimate \eqref{est.W.c} together with the previous two Lemmas implies
\begin{proposition}
    As the mesh size $|\pi|$ shrinks to 0, the Riemann sums
    \begin{equation*}
        \sum_{i=1}^n \left[W(t_{i},\varphi_{t_{i-1}})-W(t_{i-1},\varphi_{t_{i-1}})\right]
    \end{equation*}
    converges to $\int_a^bW(dt,\varphi_t)$.
\end{proposition}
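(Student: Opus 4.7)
The strategy is to invoke Lemma \ref{sew.lem} and the abstract Riemann sum lemma applied to the two-parameter function
\begin{equation*}
\mu(s,t) := W(t,\varphi_s) - W(s,\varphi_s),\qquad a\le s\le t\le b,
\end{equation*}
and then to identify the resulting sewing primitive with $\int_a^b W(\dt,\varphi_t)$ by means of Theorem \ref{thm.intW}.

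First I would verify the sewing hypothesis of Lemma \ref{sew.lem}. A direct expansion yields, for $a\le s\le u\le t\le b$, the algebraic identity
\begin{equation*}
\mu(s,t)-\mu(s,u)-\mu(u,t)=\bigl[W(t,\varphi_s)-W(u,\varphi_s)\bigr]-\bigl[W(t,\varphi_u)-W(u,\varphi_u)\bigr].
\end{equation*}
The right-hand side is exactly the cross-difference of $W$ controlled by the first term in \ref{cond.w}; combining that bound with the H\"older continuity of $\varphi$ gives
\begin{equation*}
|\mu(s,t)-\mu(s,u)-\mu(u,t)|\le \|W\|_{\tau,\lambda;a,b}\|\varphi\|_{\gamma;a,b}^{\lambda}(t-u)^{\tau}(u-s)^{\lambda\gamma}\le \kappa\,(t-s)^{1+\varepsilon},
\end{equation*}
with $\varepsilon:=\tau+\lambda\gamma-1>0$. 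Thus Lemma \ref{sew.lem} produces a primitive $\JJ\mu$, and the abstract Riemann sum lemma immediately gives $J_\pi=\sum_{i=1}^n\mu(t_{i-1},t_i)\to\JJ_a^b\mu$ as $|\pi|\downarrow 0$.

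Next I would relate $\JJ_a^b\mu$ to the fractional integral. Applying Theorem \ref{thm.intW} on each subinterval $[t_{i-1},t_i]$ with the choice $c=t_{i-1}$ yields
\begin{equation*}
\Bigl|\int_{t_{i-1}}^{t_i}W(\dt,\varphi_t)-\mu(t_{i-1},t_i)\Bigr|\le \kappa\,(t_i-t_{i-1})^{1+\varepsilon},
\end{equation*}
so that summation, together with $\sum_i(t_i-t_{i-1})^{1+\varepsilon}\le |\pi|^{\varepsilon}(b-a)$, gives
\begin{equation*}
\Bigl|\sum_{i=1}^n\int_{t_{i-1}}^{t_i}W(\dt,\varphi_t)-J_\pi\Bigr|\le \kappa\,|\pi|^{\varepsilon}(b-a)\longrightarrow 0.
\end{equation*}
Hence $\sum_i\int_{t_{i-1}}^{t_i}W(\dt,\varphi_t)\to\JJ_a^b\mu$.

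The principal obstacle is then to pass from $\sum_i\int_{t_{i-1}}^{t_i}W(\dt,\varphi_t)$ to the single integral $\int_a^b W(\dt,\varphi_t)$; equivalently, to establish that the fractional integral defined by \eqref{eq:def.w} is additive with respect to subdivision of the interval. This is not transparent from the formula because both $W_{b-}$ and $D_{b-}^{1-\alpha}$ are tied to the upper endpoint. I would resolve this by invoking the uniqueness clause of Lemma \ref{sew.lem}: the estimate \eqref{est.W.c} shows that the primitive $s\mapsto\int_a^s W(\dr,\varphi_r)$ (defined via \eqref{eq:def.w} on $[a,s]$) satisfies the sewing estimate for $\mu$, hence coincides up to a constant with $\JJ\mu$ on $[a,b]$; the required additivity then follows from the obvious additivity of $\JJ\mu$ as a function of one variable. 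Consequently $\int_a^b W(\dt,\varphi_t)=\JJ_a^b\mu$, and combined with the second step this gives $J_\pi\to\int_a^b W(\dt,\varphi_t)$, which is the desired conclusion.
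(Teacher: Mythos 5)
Your skeleton is the same as the paper's: introduce $\mu(s,t)=W(t,\varphi_s)-W(s,\varphi_s)$, check the sewing hypothesis, apply Lemma \ref{sew.lem} and the abstract Riemann sum lemma to get $J_\pi\to\JJ_a^b\mu$, and bring in the estimate \eqref{est.W.c} of Theorem \ref{thm.intW} with $c$ the left endpoint of each subinterval. Your verification of the coherence bound $|\mu(s,t)-\mu(s,u)-\mu(u,t)|\le \|W\|_{\tau,\lambda;a,b}\|\varphi\|_{\gamma;a,b}^{\lambda}(t-u)^{\tau}(u-s)^{\lambda\gamma}$ and your comparison of $J_\pi$ with $\sum_i\int_{t_{i-1}}^{t_i}W(\dt,\varphi_t)$ are correct.

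The problem is the final identification step, and your proposed fix is circular. To use the uniqueness clause of Lemma \ref{sew.lem} for the candidate primitive $F(t)=\int_a^t W(\ds,\varphi_s)$ you must verify the two-parameter increment estimate $|F(t)-F(s)-\mu(s,t)|\le K|t-s|^{1+\ep}$ for all $a\le s\le t\le b$. What \eqref{est.W.c} actually provides is $\bigl|\int_s^t W(\dr,\varphi_r)-\mu(s,t)\bigr|\le\kappa|t-s|^{1+\ep}$, where the integral is the one defined by \eqref{eq:def.w} afresh on $[s,t]$; converting this into a statement about the increment $F(t)-F(s)$ is precisely the additivity that you yourself identified as the principal obstacle (and which is genuinely nontrivial here, since in \eqref{eq:def.w} both $W_{b-}$ and $D^{1-\alpha}_{b-}$ are tied to the upper endpoint). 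Using \eqref{est.W.c} only with left endpoint $a$ gives $|F(t)-\mu(a,t)|\le\kappa(t-a)^{1+\ep}$, which does not pin $F$ down up to a constant (a perturbation of size $(t-a)^{1+\ep}$ is compatible with it). So as written you derive additivity from the identification and the identification from additivity. Note that the paper's own logical order makes this unavoidable: interval additivity is stated after, and as a consequence of, this proposition, so it cannot be presupposed. To close the gap you need an input independent of the sewing machinery, for example proving additivity of \eqref{eq:def.w} directly by fractional-calculus manipulations (splitting the domain at an interior point and using Fubini-type identities as in Lemma \ref{lem:ifst}, in the spirit of Z\"ahle's treatment of the linear Young integral), or showing directly from the representation \eqref{eq:def.w} that the Riemann sums converge to that expression.
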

It is easy to see from here that
\[
\int_a^b W(\d  t, \varphi_t)=\int_a^c W(\d  t, \varphi_t) +\int_c^b W(\d
t, \varphi_t)\qquad \forall \
a<c<b\,.
\]
This together with \eref{holder} imply easily the following.
\begin{proposition}\label{prop.frac} Assume that   \ref{cond.w} and \ref{cond.phi}
hold with $\lambda\gamma+\tau>1$.  As a function of $t$,  the indefinite
integral $\left\{ \int_{a}^t W(\d s,\varphi_{s})\,, \ t\le a\le b\right\}$ is H\"older continuous
of exponent $\tau$.
\end{proposition}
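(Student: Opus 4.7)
The plan is to combine the additivity identity stated just before the proposition,
\[
\int_a^t W(\d u,\varphi_u) - \int_a^s W(\d u,\varphi_u) = \int_s^t W(\d u,\varphi_u), \qquad a\le s<t\le b,
\]
with the global estimate \eqref{holder} proved in Theorem \ref{fractional}, applied on the subinterval $[s,t]$ in place of $[a,b]$. This reduces everything to a single-line computation.

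More precisely, for $a\le s<t\le b$, inequality \eqref{holder} gives
\[
\left|\int_s^t W(\d u,\varphi_u)\right| \le \kappa\,\|W\|_{\tau,\lambda;s,t}\,(t-s)^{\tau} + \kappa\,\|W\|_{\tau,\lambda;s,t}\,\|\varphi\|_{\gamma;s,t}^{\lambda}\,(t-s)^{\tau+\lambda\gamma}.
\]
Since the seminorms are monotone in the interval, $\|W\|_{\tau,\lambda;s,t}\le\|W\|_{\tau,\lambda;a,b}$ and $\|\varphi\|_{\gamma;s,t}\le\|\varphi\|_{\gamma;a,b}$. Factoring out $(t-s)^{\tau}$ and using $(t-s)^{\lambda\gamma}\le(b-a)^{\lambda\gamma}$ yields
\[
\left|\int_s^t W(\d u,\varphi_u)\right|
\le \kappa\,\|W\|_{\tau,\lambda;a,b}\bigl(1+\|\varphi\|_{\gamma;a,b}^{\lambda}(b-a)^{\lambda\gamma}\bigr)(t-s)^{\tau}.
\]
Combined with the additivity identity this says exactly that $t\mapsto\int_a^t W(\d u,\varphi_u)$ has finite $\tau$-Hölder seminorm on $[a,b]$, with an explicit constant in terms of $\|W\|_{\tau,\lambda;a,b}$, $\|\varphi\|_{\gamma;a,b}$ and $b-a$.

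There is no real obstacle: the only point worth flagging is that the additional term of order $(t-s)^{\tau+\lambda\gamma}$ does \emph{not} degrade the exponent, because $\lambda\gamma>0$ and we are working on a bounded interval, so $(t-s)^{\lambda\gamma}$ absorbs harmlessly into the Hölder constant. The hypothesis $\lambda\gamma+\tau>1$ is only needed to legitimize the definition of the integral via Theorem \ref{fractional}; for the Hölder regularity itself, any $\lambda\gamma\ge 0$ would suffice once the integral is defined.
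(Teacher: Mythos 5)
Your argument is exactly the one the paper intends: it states the additivity $\int_a^b=\int_a^c+\int_c^b$ and then remarks that this "together with \eqref{holder}" gives the proposition, which is precisely your application of \eqref{holder} on $[s,t]$ with the seminorms bounded by those on $[a,b]$ and the $(t-s)^{\lambda\gamma}$ factor absorbed into the constant. Your write-up is correct and simply fills in the short computation the paper leaves to the reader.
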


Further properties can be developed. For instance, we study the dependence of the nonlinear Young integration $\int W(\ds,\varphi_s)$
with respect to the medium $W$ and the integrand $\varphi$. We state the following two propositions whose
proofs left for readers (see e.g. \cite{hule2015}).
\begin{proposition}\label{prop.int.w12}
  Let $W_1$ and $W_2$ be functions on $\RR\times\RR^d$ satisfying the condition \ref{cond.w}. Let $\varphi$ be a function in $C^\gamma(\RR;\RR^d)$ and assume that $\tau+\lambda \gamma>1$. Then
  \begin{multline*}
    |\int_a^b W_1(\ds,\varphi_s)-\int_a^b W_2(\ds,\varphi_s)|\le |W_1(b,\varphi_a)- W_1(a,\varphi_a)-W_2(b,\varphi_a)+W_2(a,\varphi_a)|\\
    +c(\|\varphi\|_\infty) [W_1-W_2]_{\beta,\tau,\lambda}\|\varphi\|_{\gamma}|b-a|^{\tau+\lambda \gamma}\,.
  \end{multline*}
\end{proposition}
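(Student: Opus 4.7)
The plan is to exploit the linearity of the nonlinear Young integral in its medium and then invoke Theorem \ref{thm.intW} applied to the difference $W := W_1 - W_2$. Both $W_1$ and $W_2$ satisfy \ref{cond.w}, so by the triangle inequality $W$ also satisfies \ref{cond.w}. Since the Weyl derivatives $D_{a+}^{\alpha}$ and $D_{b-}^{1-\alpha}$ are linear, Definition \ref{defn.fracint} immediately gives
\begin{equation*}
\int_a^b W_1(\ds,\varphi_s) - \int_a^b W_2(\ds,\varphi_s) = \int_a^b W(\ds,\varphi_s),
\end{equation*}
so the whole problem reduces to bounding a single nonlinear Young integral of $W$.

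Next, I would apply Theorem \ref{thm.intW} with $c=a$ to $W$. This yields
\begin{equation*}
\left|\int_a^b W(\ds,\varphi_s) - W(b,\varphi_a) + W(a,\varphi_a)\right| \le \kappa \|W\|_{\tau,\la;a,b}\, \|\varphi\|_{\ga;a,b}^{\la}\, |b-a|^{\tau+\la\ga},
\end{equation*}
and the triangle inequality moves the boundary term $W(b,\varphi_a) - W(a,\varphi_a)$ to the right hand side, producing precisely the first summand $|W_1(b,\varphi_a) - W_1(a,\varphi_a) - W_2(b,\varphi_a) + W_2(a,\varphi_a)|$ in the claimed bound.

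What remains is to identify the constant $\kappa\|W\|_{\tau,\la;a,b}$ with $c(\|\varphi\|_\infty)\,[W_1-W_2]_{\beta,\tau,\la}$. The seminorm $\|W\|_{\tau,\la;a,b}$ has three pieces according to \eqref{eq:cond.w2}, of which the genuinely mixed piece is $[W]_{\tau,\la;a,b}$. The pure temporal piece $\sup_{s<t,x}|W(s,x)-W(t,x)||t-s|^{-\tau}$ and the pure spatial piece $\sup_{t,x\neq y}|W(t,x)-W(t,y)||x-y|^{-\la}$ are controlled using the auxiliary $\beta$-regularity recorded in $[W_1-W_2]_{\beta,\tau,\la}$, combined with the fact that when these differences are evaluated along $\varphi_s$ the relevant values of $x,y$ lie in a ball of radius $\|\varphi\|_\infty$; this is where the factor $c(\|\varphi\|_\infty)$ enters and how the extra $\|\varphi\|_\gamma|b-a|^{\lambda\gamma-\lambda}$-type factors are absorbed to match the stated right hand side.

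The main obstacle is purely bookkeeping: matching the exact form of the composite seminorm $[\,\cdot\,]_{\beta,\tau,\la}$ to the quantity produced by Theorem \ref{thm.intW}. There is no new analytic ingredient beyond linearity and the estimate already proved in Theorem \ref{thm.intW}, which is why the authors can reasonably leave the verification to the reader.
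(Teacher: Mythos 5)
The paper itself does not prove this proposition (it is explicitly left to the reader, with a pointer to \cite{hule2015}), so there is no in-paper argument to compare with; judged on its own terms, your overall route is the natural one: by linearity of the Weyl derivatives in the medium, $\int_a^b W_1(\ds,\varphi_s)-\int_a^b W_2(\ds,\varphi_s)=\int_a^b W(\ds,\varphi_s)$ with $W=W_1-W_2$, which satisfies \ref{cond.w}, and Theorem \ref{thm.intW} with $c=a$ then produces exactly the boundary term $|W_1(b,\varphi_a)-W_1(a,\varphi_a)-W_2(b,\varphi_a)+W_2(a,\varphi_a)|$ plus a remainder of order $|b-a|^{\tau+\lambda\gamma}$. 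Up to that point the argument is sound.

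The gap is in your last step, where you propose to identify $\kappa\,\|W_1-W_2\|_{\tau,\lambda;a,b}$ with $c(\|\varphi\|_\infty)\,[W_1-W_2]_{\beta,\tau,\lambda}$ by claiming the pure temporal and pure spatial pieces of the seminorm in \eqref{eq:cond.w2} are ``controlled'' by the rectangular-increment seminorm. That cannot work: if $W_1-W_2$ depends on $t$ only, say $W_1(t,x)-W_2(t,x)=g(t)$, then $[W_1-W_2]_{\tau,\lambda}=0$ while the pure temporal piece is as large as you like, so no inequality of the form $\|W_1-W_2\|_{\tau,\lambda}\le c(\|\varphi\|_\infty)[W_1-W_2]_{\beta,\tau,\lambda}$ holds. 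The correct repair is not to use the statement \eqref{est.W.c} as a black box but to rerun the proof of Theorem \ref{thm.intW} for $\tilde W(t,x)=(W_1-W_2)(t,x)-(W_1-W_2)(t,\varphi_a)$: each of the four terms $\tilde I_1,\dots,\tilde I_4$ there involves only \emph{rectangular} increments of $W_1-W_2$ (in $t$ and in the spatial argument), evaluated at spatial points of modulus at most $\|\varphi\|_\infty$; this is precisely where the (weighted) bracket $[W_1-W_2]_{\beta,\tau,\lambda}$ and the factor $c(\|\varphi\|_\infty)$ enter, while the pure time and pure space moduli are never needed (consistently with the example above, where the left-hand side of the proposition is exactly the boundary term). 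Finally, your suggestion of absorbing the discrepancy between $\|\varphi\|_\gamma^{\lambda}$ (what Theorem \ref{thm.intW} yields) and the stated $\|\varphi\|_\gamma$ via factors like $\|\varphi\|_\gamma|b-a|^{\lambda\gamma-\lambda}$ is not legitimate ($b-a$ need not be small and the exponents do not match); this mismatch is a notational artifact of quoting the companion paper's seminorms, and a careful write-up should simply state the bound with $\|\varphi\|_{\gamma}^{\lambda}$ and the bracket seminorm produced by the $\tilde I_j$ estimates.
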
%
\begin{proposition}\label{prop.int.phi12}
  Let $W$ be a function on $\RR\times\RR^d$ satisfying the condition \ref{cond.w}. Let $\varphi^1$ and $\varphi^2$ be two functions in $C^\gamma(\RR;\RR^d)$ and assume that $\tau+\lambda \gamma>1$. Let $\theta\in(0,1)$ such that $\tau+\theta \lambda \gamma>1$. Then for any $u<v$
  \begin{multline*}
    |\int_u^v W(\ds,\varphi_s^1)-\int_u^v W(\ds,\varphi_s^2)|\\
    \le C_1[W]_{\tau,\lambda}\|\varphi^1- \varphi^2\|_{\infty}^\lambda|v-u|^\tau \\
    +C_2[W]_{\tau,\lambda}\|\varphi^1- \varphi^2\|_{\infty}^{\lambda(1-\theta)} |v-u|^{\tau+\theta\lambda \gamma}\,,
  \end{multline*}
  where $C_1$ is an absolute constant and $C_2=2^{1-\theta} C_1(\|\varphi^1\|_{\gamma}^\lambda+\|\varphi^1\|_{\gamma}^\lambda)^\theta$.
\end{proposition}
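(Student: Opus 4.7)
The plan is to apply the sewing framework used earlier in the paper to the \emph{difference} of the two nonlinear Young integrals. Set
\[
\mu(s,t) := W(t,\varphi^1_s) - W(s,\varphi^1_s) - W(t,\varphi^2_s) + W(s,\varphi^2_s),
\]
and let $X:=\int_u^v W(\ds,\varphi^1_s)-\int_u^v W(\ds,\varphi^2_s)$. Because each integral is the limit of Riemann sums of the form $\sum_i[W(t_i,\varphi_{t_{i-1}})-W(t_{i-1},\varphi_{t_{i-1}})]$ (Abstract Riemann Sum Lemma), subtracting gives $X=\JJ_u^v\mu$. The diagonal piece $\mu(u,v)$ is controlled directly by $|\mu(u,v)|\le [W]_{\tau,\lambda}\|\varphi^1-\varphi^2\|_\infty^\lambda|v-u|^\tau$, which produces the first term in the stated inequality.

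The heart of the argument is estimating the second-order increment $\delta\mu(a,c,b):=\mu(a,b)-\mu(a,c)-\mu(c,b)$. Using the additivity of $W(t,x)-W(s,x)$ in the time variable, a direct computation collapses $\delta\mu$ to
\[
\delta\mu(a,c,b)=g(\varphi^1_a)-g(\varphi^2_a)-g(\varphi^1_c)+g(\varphi^2_c),\qquad g(x):=W(b,x)-W(c,x),
\]
and condition \ref{cond.w} shows that $g$ is $\lambda$-H\"older with constant $[W]_{\tau,\lambda}|b-c|^\tau$. From this single identity I extract two complementary bounds. Grouping as $[g(\varphi^1_a)-g(\varphi^2_a)]-[g(\varphi^1_c)-g(\varphi^2_c)]$ and applying H\"olderness in $x$ yields the crude bound
\[
|\delta\mu(a,c,b)|\le 2[W]_{\tau,\lambda}\|\varphi^1-\varphi^2\|_\infty^{\lambda}|b-a|^{\tau}.
\]
Grouping instead as $[g(\varphi^1_a)-g(\varphi^1_c)]-[g(\varphi^2_a)-g(\varphi^2_c)]$ and using the $\gamma$-H\"olderness of $\varphi^i$ yields the refined bound
\[
|\delta\mu(a,c,b)|\le [W]_{\tau,\lambda}(\|\varphi^1\|_\gamma^\lambda+\|\varphi^2\|_\gamma^\lambda)|b-a|^{\tau+\lambda\gamma}.
\]

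Taking the geometric interpolation (raising the crude bound to power $1-\theta$ and the refined one to power $\theta$), I obtain
\[
|\delta\mu(a,c,b)|\le 2^{1-\theta}[W]_{\tau,\lambda}\|\varphi^1-\varphi^2\|_\infty^{\lambda(1-\theta)}(\|\varphi^1\|_\gamma^\lambda+\|\varphi^2\|_\gamma^\lambda)^{\theta}|b-a|^{\tau+\theta\lambda\gamma}.
\]
Because $\tau+\theta\lambda\gamma>1$, Lemma \ref{sew.lem} applies with $\varepsilon=\tau+\theta\lambda\gamma-1$, so $|X-\mu(u,v)|=|\JJ_u^v\mu-\mu(u,v)|$ is bounded by the same constant times $(1-2^{-\varepsilon})^{-1}|v-u|^{\tau+\theta\lambda\gamma}$, which supplies the second term of the inequality and identifies the constants $C_1$ and $C_2=2^{1-\theta}C_1(\|\varphi^1\|_\gamma^\lambda+\|\varphi^2\|_\gamma^\lambda)^\theta$. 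Adding $|\mu(u,v)|$ and this sewing error completes the proof. The only genuinely delicate point is the clean algebraic reduction of $\delta\mu$ to differences of the single function $g$; once that is in hand, the two bounds and the interpolation step are straightforward.
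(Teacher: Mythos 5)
Your argument is correct: the collapse of $\delta\mu(a,c,b)$ to $g(\varphi^1_a)-g(\varphi^2_a)-g(\varphi^1_c)+g(\varphi^2_c)$ with $g(x)=W(b,x)-W(c,x)$, the two complementary bounds, the geometric interpolation giving the exponent $\tau+\theta\lambda\gamma>1$ and the factor $2^{1-\theta}$, and the application of Lemma \ref{sew.lem} with $\ep=\tau+\theta\lambda\gamma-1$ together with the Riemann-sum identification $X=\JJ_u^v\mu$ are all valid, and this sewing-lemma route is precisely the one the paper intends (it leaves the proof to the reader, citing \cite{hule2015}). Two cosmetic remarks: the sewing factor $(1-2^{-\ep})^{-1}$ depends on $\tau+\theta\lambda\gamma$, so the $C_1$ your proof produces is not literally an absolute constant as the statement asserts, and your bound silently corrects the statement's typo $\|\varphi^1\|_{\gamma}^\lambda+\|\varphi^1\|_{\gamma}^\lambda$ to $\|\varphi^1\|_{\gamma}^\lambda+\|\varphi^2\|_{\gamma}^\lambda$.
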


\section{Iterated nonlinear integral}
From  Remark \ref{r.3.relation} we see that
if $W(t,x)=\sum_{i=1}^d g_i(t)x_i$
and $\varphi_i(t)=f_i(t)$,  then
$\int_a^b W(\dt, \varphi_t)=\displaystyle
\sum_{i=1}^d \int_a^b f_i(t) \d g_i(t)$.
We know that  the multiple (iterated) integrals of the form
\[\int_{a\le s_1\le s_2\le \cdots\le s_n\le b}  \varphi (s_1, s_2, \cdots, s_n ) dg(s_1) dg(s_2)
\cdots dg(s_n)
\]
are well-defined and have applications in expanding the solutions of
 differential equations (see \cite{hustochastics}).   What is the extension of the above
 iterated integrals to the nonlinear integral?  To simplify the presentation, we consider the case $d=1$.
 General dimension can be considered in a similar way with more complex notations.

We   introduce the following  notation.  Let
\[
\De_{n,a,b}:=\left\{ (s_1, \cdots, s_n)\,; \ a\le s_1\le s_2\le\cdots\le s_n\le b\right\}
\]
be a simplex in $\RR^n$.

 \begin{definition}Let  $\varphi:\De_{n, a, b}\rightarrow \RR$ be a
continuous function.  For a fixed $s_n\in [a, b]$, we can consider
$\varphi(\cdot, s_n)$ as a function of $n-1$ variables.  Assume  we can define
$\int_{\De_{n-1, a, s_n } } \varphi(s_1, \cdots, s_{n-1}, s_n)W(ds_1, \cdot)
\cdots W(ds_{n-1}, \cdot)$,  which is a function of $s_n$, denoted by $\phi_{n-1}(s_n)$,  then
we define
\begin{equation}
\int_{a\le s_1\le\cdots\le s_ n\le b } \varphi(s_1, \cdots,   s_n)W(ds_1, \cdot)
\cdots W(ds_n, \cdot)=\int_a^b W(ds_n, \varphi_{n-1}(s_n))\,.
\end{equation}
 \end{definition}
In the case $W(t,x)=f(t) x$,  such iterated integrals have been studied in \cite{hustochastics},  where an important case is  when  $\varphi(s_1, \cdots, s_n)
=\rho(s_1)$ for some function $\rho$ of one variable.
This means   that $\varphi(s_1, \cdots, s_n) $ depends only on
the first variable. This case appears in the remainder term when one expands
the solution of  a differential equation      and
 can be dealt with in the following way.

Let $F_1,F_2,\dots,F_n$ be jointly H\"older continuous functions on $[a,b]^2$.
More precisely, for each $i=1,\dots, n$, $F_i$ satisfies
\begin{eqnarray}
 && |F_i(s_1,t_1)-F_i(s_2,t_1)-F_i(s_1,t_2)+F_i(s_2,t_2)|  \label{cond.jointF}\\
  &&\qquad \le
  \|F_i\|_{\tau,\lambda ;a,b}|s_1-s_2|^\tau|t_1-t_2|^\lambda \,,
  \quad \hbox{for all $s_1,s_2,t_1,t_2$ in $[a,b]$.}\nonumber
\end{eqnarray}
We assume that $\tau+\lambda >1$.

Suppose that $F$ is a function satisfying \eqref{cond.jointF} with $\tau+\lambda>1$. The nonlinear integral $\int_a^b F(ds,s)$ can be defined analogously to Definition \ref{defn.fracint}.  Moreover, for a H\"older continuous function $\rho$ of order $\lambda$, we set $G(s,t)=\rho(t)F(s,t)$, it is easy to see that
\begin{eqnarray*}
 &&|G(s_1,t_1)-G(s_2,t_1)-G(s_1,t_2)+G(t_1,t_2)|\\
 &\le& |\rho(t_1)-\rho(t_2)||F(s_1,t_1)-F(s_2,t_1)|\\
 &&\quad+|\rho(t_2)||F(s_1,t_1)-F(s_2,t_1)-F(s_1,t_2)+F_i(t_1,t_2)|\\
 &\le&(\|\rho\|_{\tau}\|+\|\rho\|_\infty)\|F\|_{\tau,\lambda }|s_1-s_2|^\tau|t_1-t_2|^\lambda .
\end{eqnarray*}
Hence, the integration $\int \rho(s)F(ds,s)$ is well defined. In addition, it follows from Theorem \ref{thm.intW} that the map $t\mapsto \int_a^t \rho(s)F(ds,s)$ is H\"older continuous of order $\tau$.

We have then easily
\begin{proposition} Let $\rho$ be a H\"older continuous function of order $\lambda$. Under the condition \eref{cond.jointF} and $\tau>1/2$, the iterated integral
 \begin{equation}\label{iterateIn}
  I_{a,b}(F_1,\dots,F_n)=\int_{a\le s_1\le\cdots\le s_ n\le b }  \rho(s_1) F_1(ds_1,s_1)F_2(ds_2,s_2)\cdots F_n(ds_n,s_n)
\end{equation}
is well defined.
\end{proposition}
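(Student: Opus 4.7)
We argue by induction on $n$, inductively constructing
\[
\phi_k(t) := \int_{a\le s_1\le \cdots\le s_k\le t}\rho(s_1)\,F_1(ds_1,s_1)\cdots F_k(ds_k,s_k),\qquad k=1,\ldots,n,
\]
and showing at each stage that $\phi_k\in C^{\tau}([a,b])$. Then $I_{a,b}(F_1,\ldots,F_n)=\phi_n(b)$.

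The base case $k=1$ is precisely what the discussion preceding the proposition records: set $G_1(s,t):=\rho(t)F_1(s,t)$; the rectangle-increment computation there shows that $G_1$ obeys condition \ref{cond.w} with exponents $(\tau,\lambda)$, so Theorem~\ref{fractional} applied to the ``path'' $\varphi_s=s$ (Hölder exponent $\gamma=1$, hence $\tau+\lambda\gamma=\tau+\lambda>1$) defines $\phi_1(t)=\int_a^t G_1(ds,s)=\int_a^t\rho(s)F_1(ds,s)$, and Proposition~\ref{prop.frac} gives its $\tau$-Hölder regularity.

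For the inductive step, suppose $\phi_k\in C^{\tau}([a,b])$ and put $G_{k+1}(s,t):=\phi_k(t)F_{k+1}(s,t)$. Decompose the rectangle increment as
\begin{align*}
&G_{k+1}(s_1,t_1)-G_{k+1}(s_2,t_1)-G_{k+1}(s_1,t_2)+G_{k+1}(s_2,t_2)\\
&\quad=[\phi_k(t_1)-\phi_k(t_2)]\,[F_{k+1}(s_1,t_1)-F_{k+1}(s_2,t_1)]\\
&\qquad+\phi_k(t_2)\,[F_{k+1}(s_1,t_1)-F_{k+1}(s_2,t_1)-F_{k+1}(s_1,t_2)+F_{k+1}(s_2,t_2)].
\end{align*}
The first piece is bounded by $\|\phi_k\|_{\tau}\cdot C|s_1-s_2|^{\tau}|t_1-t_2|^{\tau}$ (using the induction hypothesis on $\phi_k$ and the one-slot Hölder bound for $F_{k+1}$ obtained from \eqref{cond.jointF} by fixing a boundary value), while the second is bounded by $\|\phi_k\|_\infty\|F_{k+1}\|_{\tau,\lambda}|s_1-s_2|^{\tau}|t_1-t_2|^{\lambda}$. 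On the compact interval $[a,b]$ the two bounds may be combined into a single estimate $C_{k+1}|s_1-s_2|^{\tau}|t_1-t_2|^{\nu}$ with $\nu:=\min(\tau,\lambda)$. The remaining one-variable Hölder bounds of condition \ref{cond.w} follow from the same decomposition by specialising $s_1=s_2$ or $t_1=t_2$. Since $\tau>1/2$ gives $\tau+\tau>1$ and by hypothesis $\tau+\lambda>1$, we have $\tau+\nu>1$, so Theorem~\ref{fractional} applies with path $\varphi_s=s$ and yields
\[
\phi_{k+1}(t):=\int_a^t G_{k+1}(ds,s)=\int_a^t\phi_k(s)F_{k+1}(ds,s);
\]
Proposition~\ref{prop.frac} then gives $\phi_{k+1}\in C^{\tau}([a,b])$, closing the induction.

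The main obstacle is the exponent bookkeeping. Multiplying $F_{k+1}$ by the previously built integral $\phi_k\in C^{\tau}$ degrades the $\lambda$-regularity in the second slot of the rectangle increment of $G_{k+1}$ to $\min(\tau,\lambda)$. The assumption $\tau>1/2$ is exactly what guarantees that $\tau+\min(\tau,\lambda)$ remains strictly above the Young threshold $1$ at every stage, so the iteration propagates indefinitely without having to strengthen the regularity of any $F_i$ or of $\rho$.
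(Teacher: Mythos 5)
Your proof is correct and follows essentially the argument the paper intends: the paper's ``proof'' is just the preceding discussion (form the product medium $G(s,t)=\rho(t)F(s,t)$, integrate along the identity path, and use the $\tau$-H\"older regularity of the indefinite integral) applied inductively, with $\tau>1/2$ guaranteeing $\tau+\min(\tau,\lambda)>1$ at every stage, exactly as you spell out. The one caveat --- the one-slot H\"older bounds for $F_{k+1}$ do not literally follow from \eqref{cond.jointF} without some control of a boundary section such as $F_{k+1}(\cdot,a)$ --- is shared by the paper's own displayed computation, so it does not distinguish your argument from theirs.
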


In the simplest case when  $\rho(s)=1$ and $F_i(s,t)=f(s)$ for all $i=1,\cdots,n$, the above integral becomes
\begin{equation*}
  \int_{a\le s_1\le\cdots\le s_ n\le b }  df(s_1)\cdots df(s_n)=\frac{(f(b)-f(a))^n}{n!}.
\end{equation*}
Therefore, one would expect that
\begin{equation}
  |I_{a,b}(F_1,\dots,F_n)|\le \kappa \frac{|b-a|^{\gamma_n}}{n!}. \label{e.expected}
\end{equation}
This estimate turns out to be true   for  \eref{iterateIn}.
\begin{theorem}\label{t.4.3}  Let $F_1,\dots,F_n$ satisfy \eref{cond.jointF} and $\rho$ be H\"older
continuous with  exponent $\lambda$. We assume that $\rho(a)=0$.
Denote $\displaystyle \be=\frac{\la  +\tau-1}{\lambda}$ and
$\ell_n=\displaystyle \frac{\be^{n-1}-1}{\be-1}+\be^{n-1} (\tau+\la)$.
Then,  for any $\ga_n<\ell_n$,    there is a constant $C_n $, independent of
$a$ and $b$ (but may depends on $\ga_n$)  such that
\begin{equation}
 |I_{a,b}(F_1,\dots,F_n)|\le C_n  |b-a|^{\ga_n}\,.
 \end{equation}
\end{theorem}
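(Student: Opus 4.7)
I would prove this by induction on $n$. Define $\phi_0 := \rho$ and, for $k = 1, \ldots, n$,
$\phi_k(u) := \int_a^u \phi_{k-1}(s)\, F_k(ds, s), \ u \in [a, b]$,
so that $I_{a,b}(F_1, \ldots, F_n) = \phi_n(b)$. Since $\rho(a) = 0$, one has $\phi_k(a) = 0$ for every $k \ge 0$. The induction hypothesis carries two parts: that $\phi_k$ is H\"older continuous on $[a, b]$ (so that $G_{k+1}(s, t) := \phi_k(t) F_{k+1}(s, t)$ still satisfies \eqref{cond.jointF} via the product estimate displayed before \eqref{iterateIn}), and the sharp corner bound $|\phi_k(u)| \le C_k (u - a)^{\gamma'_k}$ for every $\gamma'_k < \ell_k$. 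The base case $k = 1$ is immediate: Theorem~\ref{thm.intW} applied to $\phi_1(u) = \int_a^u G_1(ds, s)$ with $c = a$ gives $|\phi_1(u)| \le \kappa (u - a)^{\tau + \lambda}$ because $\rho(a) = 0$ annihilates the boundary pieces $G_1(u, a)$ and $G_1(a, a)$, and H\"older regularity follows from Proposition~\ref{prop.frac}.

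For the induction step, write $\phi_k(u) = \int_a^u G_k(ds, s)$ and decompose along the lines of the proofs of Theorems~\ref{thm.intW} and~\ref{thm.intWl} into four pieces $\tilde I_1 + \tilde I_2 + \tilde I_3 + \tilde I_4$. The vanishing of $\phi_{k-1}$ at $a$ kills the boundary pieces, and the key quantity is the mixed increment
$$J := \phi_{k-1}(t)[F_k(t, t) - F_k(s, t)] - \phi_{k-1}(r)[F_k(t, r) - F_k(s, r)],\quad a \le r \le t \le s \le b.$$
Two competing estimates are available for $J$: the \emph{cross} bound $|J| \le C|s - t|^\tau|t - r|^\lambda$ from the product rule and \eqref{cond.jointF}, and the \emph{corner-sharpened} bound $|J| \le C|s - t|^\tau (t - a)^{\ell_{k-1} - \epsilon}$ from the induction hypothesis $|\phi_{k-1}(w)| \le C(w - a)^{\ell_{k-1} - \epsilon}$ together with first-slot H\"older regularity of $F_k$. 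Interpolating with weights $\beta_1 + \beta_2 = 1$ gives
$$|J| \le C|s - t|^\tau|t - r|^{\beta_1 \lambda}(t - a)^{\beta_2(\ell_{k-1} - \epsilon)},$$
after which a Beta-function computation for $\tilde I_4$ (and analogously for $\tilde I_2$) under the admissibility $1 - \tau < \alpha < \beta_1 \lambda$ produces the bound $C(u - a)^{\tau + \beta_1 \lambda + \beta_2(\ell_{k-1} - \epsilon)}$. Pushing $\beta_1 \downarrow (1 - \tau)/\lambda$ --- admissible precisely because $\tau + \lambda > 1$ forces $(1 - \tau)/\lambda < 1$ --- and $\epsilon \downarrow 0$, one recovers the exponent $1 + \beta \ell_{k-1} = \ell_k$ with $\beta = (\lambda + \tau - 1)/\lambda$, matching the stated recursion. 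The simpler terms $\tilde I_1, \tilde I_3$ contribute the larger exponent $\tau + \ell_{k-1} \ge \ell_k$. This closes the induction; taking $k = n$, $u = b$ finishes the proof.

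The main obstacle is the interpolation bookkeeping producing the recursion $\ell_k = 1 + \beta \ell_{k-1}$. One must verify simultaneously that the optimal weight $\beta_1 = (1 - \tau)/\lambda$ sits inside the admissibility window for the fractional parameter $\alpha$ (this is exactly where $\tau + \lambda > 1$ enters essentially) and that $\phi_{k-1}$ retains enough H\"older regularity on the \emph{whole} interval $[a, b]$ --- not only near $a$ --- for the product-rule bound on $G_k$ to go through. The latter is why the induction hypothesis needs both the global H\"older statement and the sharper corner bound, rather than the corner bound alone.
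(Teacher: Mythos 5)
Your route is genuinely different from the paper's. The paper proves the induction step in one line: it writes the iterated integral through the nested recursion \eref{e.k+1}, $I^{(k+1)}_{a,s}=\int_a^s F_{k+1}(dr,\,I^{(k)}_{a,r}(F_1,\dots,F_k))$, and then invokes Theorem \ref{thm.intWl} (estimate \eref{est.W.c-add}) as a black box, the inner integral playing the role of the path $\varphi$, with H\"older exponent $\ga=\tau$ (Proposition \ref{prop.frac}) and corner exponent $\ell=\ga_k$. You instead read \eref{iterateIn} in product form, $\phi_k(u)=\int_a^u\phi_{k-1}(s)F_k(ds,s)$, absorb $\phi_{k-1}$ into the medium $G_k(s,t)=\phi_{k-1}(t)F_k(s,t)$, and rerun the $\tilde I_1,\dots,\tilde I_4$ decomposition with an interpolation between a cross bound and a corner bound, i.e.\ you redo the proof of Theorem \ref{thm.intWl} inside the product setting rather than citing it. (The paper itself wavers between the two readings: the notation in \eref{iterateIn} and the $\tau>1/2$ hypothesis of the preceding proposition point to yours, while \eref{e.k+1} and the closing remark of Section 4 use the nested one.) Your base case and the treatment of $\tilde I_1,\tilde I_3$ are fine and match the paper's.

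The gap is your cross bound $|J|\le C|s-t|^\tau|t-r|^\la$. Writing $J=[\phi_{k-1}(t)-\phi_{k-1}(r)]\,[F_k(t,t)-F_k(s,t)]+\phi_{k-1}(r)\,[F_k(t,t)-F_k(s,t)-F_k(t,r)+F_k(s,r)]$, the second piece does carry $|t-r|^\la$ (and even an extra corner factor), but the first piece is controlled only by the H\"older regularity of $\phi_{k-1}$, which is $\tau$, not $\la$: an indefinite integral of the form $\int G_k(ds,s)$ is only $\tau$-H\"older (Proposition \ref{prop.frac}; already in the classical case $F(s,t)=f(s)$ it is exactly as regular as $f$). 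So the honest cross bound is $C|s-t|^\tau|t-r|^{\tau\wedge\la}$, and your interpolation then needs $\beta_1(\tau\wedge\la)>\al>1-\tau$; when $\la>\tau$ this forces $2\tau>1$ and the recursion factor degrades to $\frac{2\tau-1}{\tau}<\frac{\la+\tau-1}{\la}$, so the stated exponent $\ell_k$ is not reached by your argument (your final paragraph flags exactly this regularity issue, but the displayed bound asserts the $\la$-exponent anyway). When $\tau\ge\la$ your argument does close and reproduces the stated recursion, arguably more transparently than the paper's one-liner. Two further small points: both of your bounds on $J$ use the first-slot increment $|F_k(t,x)-F_k(s,x)|\le C|t-s|^\tau$, which is not contained in \eref{cond.jointF} as written and must be added (the time-increment part of condition \ref{cond.w}, as the paper implicitly assumes); and be aware that the recursion constant is the delicate heart of the theorem -- the paper's own route, Theorem \ref{thm.intWl} applied with the natural choice $\ga=\tau$, delivers the factor $\frac{\la\tau+\tau-1}{\tau}$ rather than $\be=\frac{\la+\tau-1}{\la}$ -- so this is precisely the step where a fully detailed argument (for instance, an induction that also tracks weighted increment bounds of $\phi_{k-1}$ near $a$) is required rather than an interpolation asserted at the exponent $\la$.
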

\begin{proof} Denote
\[
 I_{a,s}^{(k)} (F_1,\dots,F_k)=\int_{a\le s_1\le\cdots\le s_ k\le s }  \rho(s_1) F_1(ds_1,s_1)F_2(ds_2,s_2)\cdots F_k(ds_k,s_k)\,.
\]
Thus,  we see by definition that
\begin{equation}
I_{a,s}^{(k+1)} (F_1,\dots,F_{k+1} )=\int_a^s  F_{k+1} (dr,
I_{a,r}^{(k)} (F_1,\dots,F_k))\,. \label{e.k+1}
\end{equation}
We prove this theorem by induction on $n$.
When $n=1$,  the theorem follows straightforward from \eref{est.W.c}
with the choice $c=a$. Indeed, we have $|I^{(1)}_{a,t}|\le C|t-a|^{\lambda+\tau}$ and $|I^{(1)}_{a,t}-I^{(1)}_{a,s}|\le C|t-s|^{\tau}$.

The passage from $n$ to $n+1$ follows from the application of \eref{est.W.c-add}
to \eref{e.k+1}  and this concludes the proof of the theorem.
\end{proof}

\begin{remark} The estimate of Theorem \ref{t.4.3}  also holds true for the iterated nonlinear
Young integral $ I_{a,b}^{(n)}(F_1,\dots,F_n) =\int_{a\le s_1\le\cdots\le s_ k\le s }
 F_1(ds_1, \rho(s_1))F_2(ds_2,s_2)\cdots F_n(ds_n,s_n) $,
where $I_{a,b}^{(k)}(F_1,\dots,F_k) =\int_a^b F_k(ds, I_{a,s}^{(k-1)}(F_1,\dots,F_{k-1}))$,
and $ I_{a,b}^{(1)}(F_1) =\int_a^b
 F_1(ds , \rho(s ))$.
\end{remark}


\begin{thebibliography}{99}
\bibitem{FeyelPradelle06} Feyel, D. and de La Pradelle, A.
Curvilinear integrals along enriched paths,
 Electron. J. Probab. 11 (2006), 860-892.
 \bibitem{hustochastics} Hu, Y.
 Multiple integrals and expansion of solutions of differential equations driven by rough
 paths and by fractional Brownian motions,
  Stochastics 85 (2013), 859-916.
\bibitem{hule2015} Hu, Y. and Le, K.
Nonlinear Young  integrals   and  differential systems
in   H\"older media. Preprint, 2014.
\bibitem{hunualart2007}  Hu, Y. and Nualart, D.
Differential equations driven by {H}\"older continuous functions
    of order greater than 1/2.  Abel Symp. on
 Stochastic analysis and applications,   399-413,   Springer,
 2007.
 \bibitem{hunualart2009} Hu, Y. and Nualart, D.
 Rough path analysis via fractional calculus,
 Trans. Amer. Math. Soc. 361 (2009), 2689-2718.

\bibitem{young}  Young, L. C.
 An inequality of the {H}\"older type, connected with {S}tieltjes
    integration,  Acta Math. 67 (936),  251-282.

\bibitem{zahle}Z{\"a}hle, M.  Integration with respect to fractal functions and stochastic calculus.
    {I}, Probab. Theory Related Fields 111 (1998), 333-374.

\end{thebibliography}
\end{document}